\newtheorem{thm}{Theorem}[section]
\newtheorem{cor}[thm]{Corollary}
\newtheorem{lem}[thm]{Lemma}
\theoremstyle{definition}
\newcommand{\scr}[1]{\mathscr #1}
\definecolor{wco}{rgb}{0.5,0.2,0.3}
\numberwithin{equation}{section} \theoremstyle{remark}
\newcommand{\ua}{\uparrow}
\title{{\bf Derivative Formula and Harnack Inequality for Degenerate Functional SDEs}\footnote{Supported in
 part by SRFDP and the Fundamental Research Funds for the Central Universities.}
}
\author{
{\bf  Jianhai Bao$^{b)}$,  Feng-Yu Wang$^{a),b)}$,  Chenggui Yuan$^{b)}$}\\
\footnotesize{$^{a)}$School of Mathematical Sciences,
Beijing Normal
University, Beijing 100875, China}\\
 \footnotesize{$^{b)}$Department of Mathematics,
Swansea University, Singleton Park, SA2 8PP, UK}\\ \footnotesize{wangfy@bnu.edu.cn, F.-Y.Wang@swansea.ac.uk, C.Yuan@swansea.ac.uk}}
\begin{document}
\def\R{\mathbb R}  \def\ff{\frac} \def\ss{\sqrt} \def\B{\mathbf
B}
\def\N{\mathbb N} \def\kk{\kappa} \def\m{{\bf m}}
\def\dd{\delta} \def\DD{\Delta} \def\vv{\varepsilon} \def\rr{\rho}
\def\<{\langle} \def\>{\rangle} \def\GG{\Gamma} \def\gg{\gamma}
  \def\nn{\nabla} \def\pp{\partial} \def\EE{\scr E}
\def\d{\text{\rm{d}}} \def\bb{\beta} \def\aa{\alpha} \def\D{\scr D}
  \def\si{\sigma} \def\ess{\text{\rm{ess}}}
\def\beg{\begin} \def\beq{\begin{equation}}  \def\F{\scr F}
\def\Ric{\text{\rm{Ric}}} \def\Hess{\text{\rm{Hess}}}
\def\e{\text{\rm{e}}} \def\ua{\underline a} \def\OO{\Omega}  \def\oo{\omega}
 \def\tt{\tilde} \def\Ric{\text{\rm{Ric}}}
\def\cut{\text{\rm{cut}}} \def\P{\mathbb P} \def\ifn{I_n(f^{\bigotimes n})}
\def\C{\scr C}      \def\aaa{\mathbf{r}}     \def\r{r}
\def\gap{\text{\rm{gap}}} \def\prr{\pi_{{\bf m},\varrho}}  \def\r{\mathbf r}
\def\Z{\mathbb Z} \def\vrr{\varrho} \def\ll{\lambda}
\def\L{\scr L}\def\Tt{\tt} \def\TT{\tt}\def\II{\mathbb I}
\def\i{{\rm in}}\def\Sect{{\rm Sect}}\def\E{\mathbb E} \def\H{\mathbb H}
\def\M{\scr M}\def\Q{\mathbb Q} \def\texto{\text{o}} \def\LL{\Lambda}
\def\Rank{{\rm Rank}} \def\B{\scr B} \def\i{{\rm i}} \def\HR{\hat{\R}^d}
\def\to{\rightarrow}\def\l{\ell}
\def\8{\infty}

\maketitle

\begin{abstract} By constructing successful couplings, the derivative formula, gradient estimates and  Harnack inequalities are  established for the semigroup associated  with a class of degenerate functional stochastic differential equations.  \end{abstract} \noindent
 AMS subject Classification:\  60H10, 47G20.   \\
\noindent
 Keywords: Coupling, derivative formula, gradient estimate, Harnack inequality,  functional stochastic differential equation.
 \vskip 2cm

\section{Introduction}

In recent years, the coupling argument developed in \cite{ATW} for
establishing dimension-free Harnack inequality in the sense of
\cite{W97} has been intensively applied  to the study of Markov
semigroups associated with a number of stochastic (partial)
differential equations, see e.g. \cite{DRW09, ES, L, LW, O, ORW,
W07, W10, WWX10, WY, WX10, Zh} and references within. In particular,
the Harnack inequalities have been established in \cite{ES, WY} for
a class of non-degenerate functional stochastic differential
equations (SDEs), while the (Bismut-Elworthy-Li type) derivative
formula and applications have been investigated in \cite{GW} for a
class of degenerate SDEs (see also \cite{WZ, Z} for the study by
using Malliavin calculus). The aim of this paper is to establish the
derivative formula and (log-)Harnack inequalities for degenerate
functional SDEs.
 The derivative formula implies explicit gradient estimates of the associated semigroup, while   a number of applications of the (log-)Harnack inequalities
 have been summarized in \cite[\S4.2]{W11} on heat kernel estimates, entropy-cost inequalities, characterizations of invariant measures and contractivity properties of the semigroup.

Let  $m\in \Z_+$ and  $d\in \N$. Denote $\R^{m+d}= \R^m\times\R^d$,  where
$\R^m=\{0\}$ when $m=0$. For $r_0>0$, let
$\C:=C([-r_0,0];\R^{m+d})$ be  the space of continuous functions from
$[-r_0,0]$ into $\mathbb{R}^{m+d}$, which is a Banach space  with the uniform norm
$\|\cdot\|_\infty$. Consider the following functional SDE on
$\R^{m+d}:$
\beq\label{E1} \beg{cases} \d X(t)= \{AX(t)+MY(t)\}\d t,\\
\d Y(t)= \{Z(X(t), Y(t))+b(X_t,Y_t)\}\d t +\si \d B(t),
\end{cases}
\end{equation}
where $B(t)$ is a $d$-dimensional Brownian motion, $\si$ is an
invertible $d\times d$-matrix, $A$ is an $m\times m$-matrix, $M$ is
an $m\times d$-matrix, $Z:  \R^m\times\R^d \to \R^d$ and $ b:  \C
\to \R^d$ are locally Lipschitz continuous (i.e. Lipschitzian on
compact sets), $(X_t, Y_t)_{t\ge 0}$ is  a process on $\C$
 with $(X_t,Y_t) (\theta):= (X(t+\theta), Y(t+\theta)), \theta\in [-r_0,0].$
We assume that there exists an integer number $0\le k\le m-1$ such that
\beq\label{RR} \text{Rank}[M, AM, \cdots, A^kM]=m.\end{equation} When $m=0$ this condition automatically holds by convention.
Note that when $m\ge 1$,  this rank condition holds for some $k> m-1$ if and only if it holds for $k=m-1.$

Let $\nn,\nn^{(1)}$ and $\nn^{(2)}$ denote the gradient operators on $\R^{m+d}, \R^m$ and $\R^d$ respectively, and let
\beg{equation*}\beg{split} L f(x,y):= &\<Ax+My, \nn^{(1)}f(x,y)\> +\<Z(x,y), \nn^{(2)}f(x,y)\>\\
&+\ff 1 2 \sum_{i,j=1}^d (\si\si^*)_{ij} \ff{\pp^2}{\pp y_i\pp y_j}
f(x,y), \ \ (x,y)\in \R^{m+d}, f\in
C^2(\R^{m+d}).\end{split}\end{equation*} Since both $Z$ and $b$ are
locally Lipschitz continuous, due to \cite{SR}
 the equation (\ref{E1}) has a unique local solution for any initial data $(X_0,Y_0)\in\C$.
 To ensure the    non-explosion  and further regular properties of the solution, we make use of the following assumptions:

\paragraph{(A)} \ \emph{  There exist    constants $\ll,l> 0$  and $W\in C^2(\R^{m+d})$ of compact level sets  with $W\ge 1$ such that
\beg{enumerate}
\item[$(A1)$] $ LW\le \ll W,\ \ |\nn^{(2)}W|\le \ll W;$
\item[$(A2)$] $ \<b(\xi), \nn^{(2)}W(\xi(0))\> \le \ll  \|W(\xi)\|_\8,\ \ \xi \in \C;$
\item[$(A3)$] $| Z(z)-Z(z')| \le   \ll |z-z'| W(z')^l,\ \ z,z'\in\R^{m+d}, |z-z'|\le 1;$
\item[$(A4)$] $|b(\xi)-b(\xi')|  \le \ll \|\xi-\xi'\|_\8 \|W(\xi')\|_\8^l, \, \xi,\xi'  \in \C, \|\xi-\xi'\|_\infty\le 1.$
    \end{enumerate} }

\

Comparing with the framework investigated in \cite{GW,Z}, where
$b=0, A=0$ and Rank$[M]=m$ are assumed, the present model is
more general and the segment process we are going to investigate is
an infinite-dimensional Markov process. On the other hand,  unlike
in \cite{GW} where the condition $|\nn^{(2)}W|\le \ll W$ is not
used, in the present setting this condition seems essential in order
to derive moment estimates of the segment process (see the proof of
Lemma \ref{lem1} below). Moreover, if $|\nn W|\le c W$ holds for
some constant $c>0$, then $(A3)$ and $(A4)$ hold for some $\ll>0$ if
and only if there exists a constant $\ll'>0$ such that $|\nn Z|\le
\ll' W^l$  and $|\nn b|\le \ll' \|W\|_\infty^l$ holds on $\R^{m+d}$
and $\C$ respectively.

It is easy to see that {\bf (A)} holds for $W(z)= 1+|z|^2$, $l=1$
and some constant $\ll>0$ provided that $Z$ and $b$ are globally
Lipschitz continuous on $\R^{m+d}$
 and $\C$ respectively. It is clear that $(A1)$ and $(A2)$ imply the non-explosion of the solution (see Lemma \ref{lem1} below). In this paper
we aim to investigate regularity properties of the Markov semigroup associated with the segment process:
$$P_t f(\xi)= \E^\xi f(X_t,Y_t),\ \ f\in \B_b(\C), \xi\in\C,$$ where $\B_b(\C)$ is the class of all bounded measurable functions on $\C$ and
$\E^\xi$ stands for the expectation for the solution starting at the
point $\xi\in\C$. When $m=0$ we have $X_t\equiv 0$ and
$\C=\{0\}\times\C_2\equiv\C_2:=C([-r_0,0];\R^d)$, so that $P_tf$ can
be simply formulated as $P_t f(\xi)=\E^\xi f(Y_t)$ for $f\in
\B_b(\C_2), \xi\in \C_2.$  Thus, (\ref{E1}) also includes
non-degenerate functional SDEs. For any $h=(h_1, h_2)\in\C$ and
$z\in \R^{m+d}$, let $\nn_h$ and $\nn_z$ be the directional
derivatives along $h$ and $z$ respectively. The following result
provides an explicit derivative formula for $P_T, T>r_0.$

\begin{thm}\label{T1.1} Assume
  {\bf (A)} and let $T>r_0$.  Let $v:[0,T]\to \R$ and $\aa: [0,T]\to \R^m$ be Lipschitz continuous such that
$v(0)=1, \aa(0)=0, v(s)=0,  \aa(s)=0 $ for $s\ge T-r_0,$ and
\beq\label{LL} h_1(0)+\int_0^t\e^{-sA}M\phi(s)\d s =0,\ \ \ t\ge
T-r_0,\end{equation} where  $\phi(s):= v(s) h_2(0)+\aa(s)$.  Then
for any $h=(h_1, h_2)\in \C$ and $f \in \B_b(\C),$ \beq\label{Bis}
\nn_hP_T f(\xi)
 =\E^\xi \bigg\{f(X_T, Y_T)\int_0^{T}\big\<N(s), (\si^*)^{-1}\d B(s)\big\>\bigg\},\ \ \ \xi\in\C\end{equation} holds for
 $$N(s):=  (\nn_{\Theta(s)}Z)(X(s), Y(s))
 + (\nn_{\Theta_s}b)(X_s, Y_s)-v'(s)h_2(0)-\aa'(s),\ \ \ s\in [0,T],$$ where
 $$\Theta(s)=(\Theta^{(1)}(s), \Theta^{(2)}(s)):= \beg{cases} h(s),\ \  &\text{if}\  s\le 0,\\
\big(\e^{As}h_1(0) +\int_0^{s} \e^{(s-r)A} M\phi(r)\d r,\ \phi(s)\big), &\text{if}\
s>0.\end{cases} $$
 \end{thm}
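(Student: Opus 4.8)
\textbf{Proof plan.} The plan is to use the coupling-by-change-of-measure method. Fix $\xi=(\xi_1,\xi_2)\in\C$, $h=(h_1,h_2)\in\C$ and a small $\vv\in\R\setminus\{0\}$; let $(X(t),Y(t))_{t\ge0}$ solve (\ref{E1}) with $(X_0,Y_0)=\xi$, and let $\Theta=(\Theta^{(1)},\Theta^{(2)})$ be as in the statement. Rather than differentiating the solution map directly, I would introduce the \emph{explicit} shifted process $\big(\tilde X^\vv(t),\tilde Y^\vv(t)\big):=\big(X(t)+\vv\,\Theta^{(1)}(t),\ Y(t)+\vv\,\Theta^{(2)}(t)\big)$ for $t\ge -r_0$. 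Since $\Theta(s)=h(s)$ for $s\le0$, its initial segment is $\xi+\vv h$; since $\Theta^{(1)}\in C^1([0,T])$ with $\dot\Theta^{(1)}=A\Theta^{(1)}+M\Theta^{(2)}$ and $\Theta^{(2)}=\phi$ on $[0,T]$, the first (degenerate) equation of (\ref{E1}) holds for $(\tilde X^\vv,\tilde Y^\vv)$ \emph{automatically}, while the second equation becomes
$$\d\tilde Y^\vv(t)=\big\{Z(\tilde X^\vv(t),\tilde Y^\vv(t))+b(\tilde X^\vv_t,\tilde Y^\vv_t)\big\}\,\d t+\si\,\d B(t)+\si\,\gg^\vv(t)\,\d t,$$
$$\gg^\vv(t):=\si^{-1}\Big\{Z(X(t),Y(t))-Z(\tilde X^\vv(t),\tilde Y^\vv(t))+b(X_t,Y_t)-b(\tilde X^\vv_t,\tilde Y^\vv_t)+\vv\big(v'(t)h_2(0)+\aa'(t)\big)\Big\}.$$
The structural point is that, since $T>r_0$, the conditions $v(s)=\aa(s)=0$ for $s\ge T-r_0$ and (\ref{LL}) force $\Theta(t)=0$ for all $t\in[T-r_0,T]$, hence $\Theta_T=0$ in $\C$, so that $(\tilde X^\vv_T,\tilde Y^\vv_T)=(X_T,Y_T)$ \emph{pathwise}. (The existence of such Lipschitz $v,\aa$ satisfying (\ref{LL}) is exactly what the rank condition (\ref{RR}) provides.)

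Next I would remove the extra drift by Girsanov's theorem. Set $\tilde B^\vv(t):=B(t)+\int_0^t\gg^\vv(s)\,\d s$ and $R_\vv:=\exp\big(-\int_0^T\<\gg^\vv(s),\d B(s)\>-\ff12\int_0^T|\gg^\vv(s)|^2\,\d s\big)$. Because $\Theta^{(1)},\Theta^{(2)}$ and $s\mapsto\Theta_s$ are bounded on $[0,T]$, for $|\vv|$ small one has $|X-\tilde X^\vv|$, $|Y-\tilde Y^\vv|$, $\|X_t-\tilde X^\vv_t\|_\infty$, $\|Y_t-\tilde Y^\vv_t\|_\infty\le1$, so $(A3)$, $(A4)$ yield $|\gg^\vv(t)|\le C|\vv|\big(1+W(X(t),Y(t))^l+\|W(X_t,Y_t)\|_\infty^l\big)$. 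Combining this with the moment estimates of Lemma~\ref{lem1} (and, if necessary, localizing along $\tau_n:=\inf\{t\ge0:\|W(X_t,Y_t)\|_\infty\ge n\}$, which increase to $\infty$ by the non-explosion ensured by $(A1)$--$(A2)$) shows that $(R_\vv(t))_{t\in[0,T]}$ is a uniformly integrable martingale; hence $\Q_\vv:=R_\vv\P$ is a probability measure under which $\tilde B^\vv$ is a Brownian motion, so $(\tilde X^\vv,\tilde Y^\vv)$ is a weak solution of (\ref{E1}) started from $\xi+\vv h$. By weak uniqueness and the pathwise identity $(\tilde X^\vv_T,\tilde Y^\vv_T)=(X_T,Y_T)$,
$$P_Tf(\xi+\vv h)=\E_{\Q_\vv}f(\tilde X^\vv_T,\tilde Y^\vv_T)=\E\big(R_\vv\,f(X_T,Y_T)\big),\qquad f\in\B_b(\C).$$

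Finally I would differentiate this identity at $\vv=0$. Since $\gg^0\equiv0$ and $R_0=1$, one gets $\ff{\d}{\d\vv}\big|_{\vv=0}R_\vv=-\int_0^T\<\dot\gg^0(s),\d B(s)\>$; the chain rule together with $\ff{\d}{\d\vv}\big|_{\vv=0}\Theta^{(2)}=v'(\cdot)h_2(0)+\aa'(\cdot)$ on $(0,T]$ gives $\dot\gg^0(s)=-\si^{-1}N(s)$ with $N$ as in the statement, so that $\ff{\d}{\d\vv}\big|_{\vv=0}R_\vv=\int_0^T\<\si^{-1}N(s),\d B(s)\>=\int_0^T\<N(s),(\si^*)^{-1}\d B(s)\>$, i.e. (\ref{Bis}) upon differentiating $P_Tf(\xi+\vv h)=\E(R_\vv f(X_T,Y_T))$. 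To make the differentiation rigorous, write $\vv^{-1}\{P_Tf(\xi+\vv h)-P_Tf(\xi)\}=\E\big(\vv^{-1}(R_\vv-1)f(X_T,Y_T)\big)$ and prove $\vv^{-1}(R_\vv-1)\to\int_0^T\<\si^{-1}N(s),\d B(s)\>$ in $L^1(\P)$: one first shows $\vv^{-1}\gg^\vv\to\dot\gg^0$ in $L^2([0,T]\times\OO)$ --- pointwise convergence coming from the directional derivatives of $Z$ and $b$ (understood in the a.e.\ sense; they exist if $Z,b$ are $C^1$, as in the Remark following {\bf (A)}), with $L^2$-domination supplied by $(A3)$, $(A4)$ and Lemma~\ref{lem1} --- which transfers to $\vv^{-1}\log R_\vv$, and then combines the elementary bound on $(\e^x-1)/x$ near $x=0$ with uniform exponential integrability of the Girsanov exponents to promote convergence in probability to $L^1$, $f$ being bounded. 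The main obstacle is precisely this last package of estimates --- the martingale property of $R_\vv$ and the uniform integrability allowing the interchange of $\ff{\d}{\d\vv}$ with $\E$ --- since $Z$ and $b$ are only locally Lipschitz, so everything rests on the moment bounds of Lemma~\ref{lem1} and a careful use of $(A3)$--$(A4)$ applied to the bounded increments $\vv\Theta$.
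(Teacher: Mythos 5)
Your proposal is correct and follows essentially the same route as the paper: the shifted process $(X+\vv\Theta^{(1)},Y+\vv\Theta^{(2)})$ is exactly the coupling $(X^\vv,Y^\vv)$ of (\ref{E2}) (the paper defines it by the modified SDE and then identifies it with the shift in Proposition \ref{Pro1}, you go in the reverse direction), and the Girsanov removal of the extra drift, the pathwise identity $(X^\vv_T,Y^\vv_T)=(X_T,Y_T)$, the identity $P_Tf(\xi+\vv h)=\E(R_\vv f(X_T,Y_T))$, and the differentiation at $\vv=0$ via uniform integrability of $\vv^{-1}(R_\vv-1)$ all match Lemmas \ref{L2.2}--\ref{L2.3} and the concluding argument. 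Your variant for the $L^1$ convergence of the difference quotients is a minor technical substitute for the $x\log x$ bound the paper borrows from \cite{GW}, not a different approach.
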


A simple choice of $v$ is
$$v(s)= \ff{(T-r_0-s)^+}{T-r_0},\ \ \ s\ge 0.$$ To present a specific choice of $\aa$, let
$$Q_t:= \int_0^t \ff{s(T-r_0-s)^+}{(T-r_0)^2}\e^{-sA} MM^*\e^{-s A^*}\d s,\ \ \ t>0.$$ According to \cite{S} (see also \cite[Proof of Theorem 4.2(1)]{WZ}),  when $m\ge 1$ the matrix $Q_t$ is invertible with
\beq\label{QQ} \|Q_t^{-1}\|\le c(T-r_0)(t\land 1)^{-2(k+1)},\ \ t>0\end{equation} for some constant $c>0.$

\begin{cor}\label{C1.2} Assume
  {\bf (A)} and let $T>r_0$.  Then $(\ref{Bis})$ holds for $v(s)=\ff{(T-r_0-s)^+}{T-r_0}$ and
$$\aa(s)= -\ff{s(T-r_0-s)^+}{(T-r_0)^2}M^*\e^{-sA^*}Q_{T-r_0}^{-1} \bigg(h_1(0)+\int_0^{T-r_0} \ff{(T-r_0-r)^+}{T-r_0} \e^{-rA}Mh_2(0)\d r\bigg),$$ where by convention 
$M=0\ ($hence, $\aa=0)$ if $m=0$. \end{cor}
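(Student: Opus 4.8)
The plan is to derive Corollary~\ref{C1.2} as a direct application of Theorem~\ref{T1.1}, so the only real work is to verify that the specific choices of $v$ and $\aa$ proposed in the statement satisfy the three hypotheses required by the theorem: Lipschitz continuity of $v$ and $\aa$ on $[0,T]$, the boundary/support conditions $v(0)=1$, $\aa(0)=0$, $v(s)=0$, $\aa(s)=0$ for $s\ge T-r_0$, and the key constraint~\eqref{LL}, namely $h_1(0)+\int_0^t\e^{-sA}M\phi(s)\,\d s=0$ for all $t\ge T-r_0$, where $\phi(s)=v(s)h_2(0)+\aa(s)$.

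First I would dispose of the degenerate case $m=0$: then $M=0$, $\aa\equiv 0$, $\phi(s)=v(s)h_2(0)$, and \eqref{LL} reads $h_1(0)=0$, which holds trivially since $\R^m=\{0\}$; so the corollary is immediate from the theorem. For $m\ge 1$, I would check the elementary conditions first. With $v(s)=\ff{(T-r_0-s)^+}{T-r_0}$ we plainly have $v(0)=1$, $v$ Lipschitz, and $v(s)=0$ for $s\ge T-r_0$. For $\aa$, the factor $\ff{s(T-r_0-s)^+}{(T-r_0)^2}$ vanishes at $s=0$ and for $s\ge T-r_0$ and is Lipschitz on $[0,T]$; multiplying by the smooth matrix-valued function $M^*\e^{-sA^*}$ and the constant vector $Q_{T-r_0}^{-1}\big(h_1(0)+\int_0^{T-r_0}v(r)\e^{-rA}Mh_2(0)\,\d r\big)$ keeps it Lipschitz and preserves the vanishing at the two endpoints; note $Q_{T-r_0}$ is invertible by the cited result of \cite{S} together with \eqref{QQ}.

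The substantive step is \eqref{LL}. Since $\phi(s)=v(s)h_2(0)+\aa(s)=0$ for $s\ge T-r_0$, the integral $\int_0^t\e^{-sA}M\phi(s)\,\d s$ is constant in $t$ for $t\ge T-r_0$, so it suffices to verify the identity at $t=T-r_0$, i.e.
\beq\label{LL-check} h_1(0)+\int_0^{T-r_0}\e^{-sA}M\big(v(s)h_2(0)+\aa(s)\big)\,\d s=0.\end{equation}
Plugging in the definition of $\aa(s)$ and using $\e^{-sA}MM^*\e^{-sA^*}$ together with the weight $\ff{s(T-r_0-s)^+}{(T-r_0)^2}$, the $\aa$-contribution to the integral is exactly $-\int_0^{T-r_0}\ff{s(T-r_0-s)^+}{(T-r_0)^2}\e^{-sA}MM^*\e^{-sA^*}\,\d s$ applied to the vector $Q_{T-r_0}^{-1}\big(h_1(0)+\int_0^{T-r_0}v(r)\e^{-rA}Mh_2(0)\,\d r\big)$, and that integral is precisely $Q_{T-r_0}$ by definition. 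Hence the $\aa$-term equals $-Q_{T-r_0}Q_{T-r_0}^{-1}\big(h_1(0)+\int_0^{T-r_0}v(r)\e^{-rA}Mh_2(0)\,\d r\big)=-h_1(0)-\int_0^{T-r_0}v(s)\e^{-sA}Mh_2(0)\,\d s$, which cancels the remaining two terms in \eqref{LL-check} exactly. This verifies \eqref{LL}, and the corollary then follows by invoking Theorem~\ref{T1.1}.

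The main obstacle, such as it is, is purely bookkeeping: one must be careful that the constant vector factored out of $\aa(s)$ is genuinely independent of $s$ so that it can be pulled through the integral sign, and that the weight functions in the definition of $\aa$ and in the definition of $Q_t$ match term for term (both are $\ff{s(T-r_0-s)^+}{(T-r_0)^2}$); once this alignment is observed, the identity collapses to the tautology $Q_{T-r_0}Q_{T-r_0}^{-1}=I$. No new analytic estimates are needed beyond the invertibility of $Q_{T-r_0}$, which is already supplied by \eqref{QQ}.
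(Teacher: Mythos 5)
Your proposal is correct and follows essentially the same route as the paper: reduce to verifying condition \eqref{LL} for the given $v$ and $\aa$ (the $m=0$ case being trivial), and observe that the $\aa$-contribution to $\int_0^{T-r_0}\e^{-sA}M\phi(s)\,\d s$ produces exactly $-Q_{T-r_0}Q_{T-r_0}^{-1}$ applied to $h_1(0)+\int_0^{T-r_0}v(r)\e^{-rA}Mh_2(0)\,\d r$, so that everything cancels to $-h_1(0)$. Your additional explicit checks of the Lipschitz and endpoint conditions on $v$ and $\aa$ are fine and merely make explicit what the paper leaves tacit.
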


The following gradient estimates are direct consequences of Theorem \ref{T1.1}.

\beg{cor}\label{C1.3} Assume  {\bf (A)}.   Then:
\beg{enumerate}\item[$(1)$] There exists a constant $C\in
(0,\infty)$ such that
\beg{equation*} \beg{split}|\nn_h P_Tf(\xi)|\le C &\ss{P_T f^2(\xi)} \bigg\{ |h(0)|\Big(1 + \ff{\|M\|}{(T-r_0)^{2k+1}\land 1}\Big)\\
&+  \|W(\xi)\|_\infty^l\ss{T\land (1+r_0)} \Big(\|h\|_\infty+\ff{\|M\|\cdot |h(0)|}{(T-r_0)^{2k+1}\land 1}\Big)\bigg\}
\end{split}\end{equation*} holds for all $T>r_0, \xi,h\in\C$ and $f\in \B_b(\C)$;
\item[$(2)$] Let $|\nn^{(2)}W|^2\le \dd W$ hold for some constant $\dd>0.$ If $l\in [0,1/2)$ then there exists a constant $C\in (0,\infty)$ such that
\beg{equation*}\beg{split} &|\nn_h P_T f(\xi)|\le r \big\{P_Tf\log
f- (P_Tf)\log P_T f\big\}(\xi)\\
& \qquad+  \ff{CP_Tf(\xi)}r\bigg\{|h(0)|^2\bigg(
 \ff {1} {(T-r_0)\land 1} +\ff {\|M\|^2} {\{(T-r_0)\land 1\}^{4k+3}}\bigg)\\
 &\qquad+ \|h\|_\infty^2\|W(\xi)\|_\infty +\bigg(\|h\|_\infty^2 + \ff{|h(0)|^2\|M\|^2}{\{(T-r_0)\land 1\}^{4k+2}}\bigg)^{\ff 1 {1-2l}} \bigg(\ff{r^2}{\|h\|_\infty^2}\lor 1\bigg)^{\ff{2l}{1-2l}}\bigg\}\end{split}\end{equation*} holds for  all $r>0, T>r_0,
\xi,h\in\C$ and positive $f\in \B_b(\C)$;
\item[$(3)$] Let $|\nn^{(2)}W|^2\le \dd W$ hold for some constant $\dd>0.$ If $l=\ff 1 2$ then there exist constants $C,C'\in (0,\infty)$ such that
\beg{equation*}\beg{split}|\nn_h P_T f(\xi)|\le & r \big\{P_Tf\log f- (P_Tf)\log P_T f\big\}(\xi)\\
&  + \ff{CP_Tf(\xi)}r\bigg\{  |h(0)|^2\bigg(
 \ff {1} {(T-r_0)\land 1} +\ff {\|M\|^2} {\{(T-r_0)\land 1\}^{4k+3}}\bigg)\\
 & +\|W(\xi)\|_\infty\Big(\|h\|_\infty^2+\ff{\|M\|^2|h(0)|^2}{\{(T-r_0)\wedge1\}^{4k+2}}\Big)\bigg\}\end{split}\end{equation*}
  holds for  $$r\ge C'\bigg(\|h\|_\infty+\ff{\|M\|\cdot|h(0)|}{\{(T-r_0)\wedge1\}^{2k+1}}\bigg),$$ all $T>r_0, \xi,h\in\C$ and positive $f\in \B_b(\C)$.
\end{enumerate} When $m=0$ the above assertions hold with $\|M\|=0$. \end{cor}

According to \cite{ATW09}, the entropy gradient estimate implies the Harnack inequality with power, we have the following result which follows immediately from Corollary \ref{C1.3} (2)   and \cite[Proposition 4.1]{GW}. Similarly, Corollary \ref{C1.3} (3) implies the same type Harnack inequality for smaller $\|h\|_\infty$ comparing to $T-r_0$.

 \beg{cor}\label{C1.4} Assume  {\bf (A)} and let $|\nn^{(2)}W|^2\le \dd W$ hold for some constant $\dd>0.$  If $l\in [0,\ff 1 2)$ then there exists a constant $C\in (0,\infty)$ such that
 \beg{equation*}\beg{split}  (P_Tf)^p(\xi+h)\le  &P_Tf^p(\xi)
  \exp\bigg[\ff{Cp}{p-1} \bigg\{\|h\|_\infty^2 \int_0^1\|W(\xi+sh)\|_\infty\d s\\
   & \quad + \bigg(\|h\|_\infty^2+\ff{\|M\|^2|h(0)|^2}{\{(T-r_0)\land 1\}^{4k+2}}\bigg)^{\ff 1 {1-2l}}\bigg(\ff{(p-1)^2}{\|h\|_\infty^2}\lor 1\bigg)^{\ff{2l}{1-2l}} \bigg\}\bigg]\end{split}\end{equation*}
 holds for all $T>r_0, p>1, \xi,h\in \C$ and positive $f\in \B_b(\C).$  If $m=0$ then the assertion holds for $\|M\|=0.$
 \end{cor}

Finally, we consider  the log-Harnack inequality introduced in \cite{RW, W10b}. To this end, as in \cite{GW}, we slightly strengthen $(A3)$ and $(A4)$ as for follows:
there exists an increasing function $U$ on $[0,\infty)$ such that \beg{enumerate}\item[$(A3')$] $| Z(z)-Z(z')| \le   \ll |z-z'| \big\{W(z')^l+U(|z-z'|)\big\},\ \ z,z'\in\R^{m+d};$
\item[$(A4')$] $|b(\xi)-b(\xi')|  \le \ll \|\xi-\xi'\|_\8 \big\{\|W(\xi')\|_\8^l+U(\|\xi-\xi'\|_\8)\big\}, \, \xi,\xi'  \in \C.$
    \end{enumerate}  Obviously, if $$W(z)^l\le c\{W(z')^l +U(|z-z'|)\},\ \ z,z'\in \R^{m+d}$$ holds for some constant $c>0$, then $(A3)$ and $(A4)$ imply $(A3')$ and $(A4')$ respectively with possibly different $\ll.$

\beg{thm}\label{T1.5} Assume $(A1), (A2), (A3')$ and $(A4')$.  Then
there exists a constant $C\in (0,\infty)$ such that for any positive
$f\in \B_b(\C), T>r_0$ and $\xi,h\in\C$, \beg{equation*}\beg{split}
&P_T\log f(\xi+h)- \log P_T f(\xi)\le C
\bigg\{\bigg[\|W(\xi+h)\|_\8^{2l} +U^2\Big(C\|h\|_\8
+\ff{C\|M\|\cdot|h(0)|}{(T-r_0)\land 1}\Big)\bigg]\|h\|_\infty^2 \\
&\qquad\qquad\ \ \ \ \ \ \ \ \ \ \ \ \ \ \ \ \ \ \ \ \ \ \ \ \ \ \ \
\ \  +\ff{|h(0)|^2}{(T-r_0)\wedge1}
+\ff{\|M\|^2|h(0)|^2}{\{(T-r_0)\land
1\}^{4k+3}}\bigg\}.\end{split}\end{equation*}If $m=0$ then the
assertion holds for $\|M\|=0.$
\end{thm}

For applications of the Harnack and log-Harnack inequalities we are referred to \cite[\S 4.2]{W11}. The remainder of the paper is organized as follows: Theorem \ref{T1.1} and Corollary \ref{C1.2} are proved Section 2, while Corollary \ref{C1.3} and Theorem \ref{T1.5} are proved in Section 3; in Section 4  the  assumption {\bf(A)} is weakened for the discrete time delay case, and two examples are presented to illustrate our results.

\section{Proofs of Theorem \ref{T1.1} and Corollary \ref{C1.2}}

 \beg{lem}\label{lem1}
      Assume $(A1)$ and $(A2)$. Then for any $k>0$ there exists a constant $C>0$ such that
$$\E^\xi \sup_{-r_0\le s \le t}W(X(s),Y(s))^k \le 3\|W(\xi)\|_\infty^k \e^{Ct},\ \ t\ge 0,\ \xi\in\C$$ holds. Consequently, the solution is non-explosive.
      \end{lem}

\beg{proof} For any $n\ge 1$, let  $$ \tau_n:= \inf\{t\in [0, T]:
|X(t)|+|Y(t)|\ge n\}.
$$ Moreover, let
$$\l(s):= W(X,Y)(s),\ \ \ s\ge -r_0.$$
By the It\^o formula and using the first inequality in $(A1)$ and $(A2)$ we may find a constant $C_1>0$ such that

\begin{equation}\label{W2}\beg{split}& \l(t\land\tau_n)^k = \l(0)^k +k \int_0^{t\land\tau_n} \l(s)^{k-1} \<\nn^{(2)}W(X,Y)(s), \si\d B(s)\>\\
 &\quad +k \int_0^{t\land\tau_n}\l(s)^{k-1} \Big\{LW(X,Y)(s)+\big\<b(X_s,
 Y_s), \nn^{(2)}W(X,Y)(s)\big\>\\
 &\quad+\frac{1}{2}(k-1)\l(s)^{-1}|\sigma^*\nn^{(2)}W(X,Y)(s)|^2\Big\}\d s\\
&\le l(0)^k+ k \int_0^{t\land\tau_n} \l(s)^{k-1}
\<\nn^{(2)}W(X,Y)(s), \si\d B(s)\> +C_1 \int_0^{t\land\tau_n}
\sup_{r\in [-r_0, s]}\l(r)^k\d s.
\end{split}\end{equation} Noting that    by  the second inequality
in $(A1)$ and the Burkholder-Davis-Gundy inequality we obtain

\beg{equation*}\beg{split} &k\E^\xi \sup_{s\in [0,t]}
\Big|\int_0^{s\land \tau_n}  \l(r)^{k-1} \<\nn^{(2)}W(X,Y)(s), \si\d
B(r)\>\Big|\le C_2
\E^\xi\bigg( \int_0^{t} \l(s\land\tau_n)^{2k}\d s\bigg)^{1/2}\\
&\le C_2 \E^\xi\bigg\{ \Big(\sup_{s\in [0,t]} \l(s\land \tau_n)^k\Big)^{1/2}\bigg(\int_0^{t} \l(s\land\tau_n)^k\d s\bigg)^{1/2}\bigg\}\\
&\le \ff 1 2 \E^\xi \sup_{s\in [0,t]} \l(s\land \tau_n)^k + \ff
{C_2^2} 2 \E^\xi\int_0^t \sup_{r\in [0,s]} \l(r\land\tau_n)^k\d
s\end{split}
\end{equation*} for some constant $C_2>0$. Combining this with
(\ref{W2}) and noting that $(X_0,Y_0)=\xi$, we conclude that there
exists a constant $C>0$ such that
 $$\E^\xi \sup_{-r_0\le s\le t}\l(s\land\tau_n)^k  \le 3\|W(\xi)\|_\infty^k
+ C\E^\xi \int_0^{t} \sup_{s\in [-r_0,t]} \l(s)^k\d s,\ \ \ t\ge 0.
$$
Due to the  Gronwall lemma this implies that
$$\E^\xi \sup_{-r_0\le s\le t}\l(s\land\tau_n)^k \le 3\|W(\xi)\|_\infty^k\e^{Ct},\ \ \ t\ge 0, n\ge 1.$$
Consequently, we have $\tau_n\uparrow \infty$ as $n\uparrow\infty$,
and thus the desired inequality follows by letting $n\to\infty$.
\end{proof}

 To establish the derivative formula, we first construct couplings for solutions starting from $\xi$ and $\xi+\vv h$ for $\vv\in (0,1],$ then let $\vv\to 0$.
For fixed $\xi=(\xi_1, \xi_2), h=(h_1,h_2)\in \scr C$, let $(X(t), Y(t))$ solve (\ref{E1})  with $(X_0,Y_0)= \xi$; and for any $\vv\in (0, 1]$, let $(X^\vv(t), Y^\vv(t))$ solve the equation
\beq\label{E2} \beg{cases} \d X^\vv(t)= \{AX^\vv(t)+MY^\vv(t)\}\d t,\\
\d Y^\vv(t)= \{Z(X(t), Y(t))+b(X_t,Y_t)\}\d t +\si \d B(t)+ \vv\{v'(t)h_2(0)+\aa'(t)\}\d t\end{cases}\end{equation} with
$(X_0^\vv,Y_0^\vv)= \xi+\vv h.$
By Lemma \ref{lem1} and (\ref{EE}) below,
the solution to (\ref{E2}) is non-explosive as well.

\beg{prp} \label{Pro1} Let $\phi(s):= v(s)h_2(0)+\aa(s),\  s\in [0,
T]$, and  the conditions of Theorem \ref{T1.1} hold. Then
\beq\label{EE} (X^\vv(t),Y^\vv(t))=(X(t),Y(t))+\vv\Theta(t), \ \
\vv, t\ge 0\end{equation} holds for
$$\Theta(t):=(\Theta^{(1)}(t), \Theta^{(2)}(t)):= \beg{cases} h(t),\ \  &\text{if}\  t\le 0,\\
\big(\e^{At}h_1(0) +\int_0^{t} \e^{(t-r)A} M\phi(r)\d r,\
\phi(t)\big), &\text{if}\ t>0.\end{cases} $$ In particular,
$(X_T^\vv, Y_T^\vv)=(X_T,Y_T).$\end{prp}

\beg{proof} By (\ref{E2}) and noting that $v(0)=1$ and $v(s)=0$ for $s\ge T-r_0$,  we have
$Y^\vv(t)= Y(t)+ \vv\phi(t)$ and
$$X^\vv(t)= X(t) +\vv \e^{At} h_1(0) +\vv \int_0^t \e^{(t-s)A}M\phi(s)\d s,\ \ t\ge 0.$$  Thus, (\ref{EE}) holds. Moreover, since $\aa(s)=v(s)=0$ for $s\ge T-r_0$, we
have $\Theta^{(2)}(s)=\phi(s)=0$  for $s\ge T-r_0$. Moreover, by (\ref{LL}) we have $\Theta^{(1)}(s)=0$ for $s\ge T-r_0.$ Therefore, the proof is finished.
\end{proof}

Since according to Proposition \ref{Pro1} we have  $(X_T^\vv,Y_T^\vv)=(X_T,Y_T)$. Noting that $(X_0^\vv,Y_0^\vv)=\xi+\vv h$,
if  (\ref{E2}) can be formulated as (\ref{E1}) using a different Brownian motion, then we are able to link $P_Tf(\xi)$  to $P_Tf(\xi+\vv h)$ and furthermore
derive the derivative formula by taking derivative w.r.t. $\vv$ at $\vv=0.$ To this end, let
$$
\Phi^\vv(s)=Z(X(s), Y(s))-Z(X^\vv(s),
Y^\vv(s))+b(X_s,Y_s)-b(X^\vv_s,Y^\vv_s)+\vv\{v'(s)h_2(0)+\aa'(s)\}.
$$
Set
\begin{align*}
R^\vv(s)=\exp\bigg[-\int_0^s\< \si^{-1}\Phi^\vv(r), \d
B(r)\>-\frac{1}{2}\int_0^s |\si^{-1}\Phi^\vv(r)|^2\d r\bigg],
\end{align*}
and
$$
B^\vv(s)=B(s)+\int_0^s\si^{-1}\Phi^\vv(r)\d r.
$$ Then (\ref{E2}) reduces to
\beq\label{E2'}   \beg{cases} \d X^\vv(t)= \{AX^\vv(t)+MY^\vv(t)\}\d t,\\
\d Y^\vv(t)= \{Z(X^\vv(t), Y^\vv(t))+b(X_t^\vv,Y_t^\vv)\}\d t +\si \d B^\vv(t).
\end{cases}
\end{equation}  According to the Girsanov theorem, to ensure that $B^\vv(t)$ is a Browanian motion under $\Q_\vv:= R^\vv(T)\P$, we first prove that
$R^\vv(t)$ is an exponential martingale. Moreover, to obtain the derivative formula using the dominated convergence theorem, we also need $\{\ff{R^\vv(T)-1}\vv\}_{\vv\in (0,1)}$ to be uniformly integrable.  Therefore, we will need the following two lemmas.

\begin{lem}\label{L2.2}
Let  {\bf (A)} hold. Then there exists $\vv_0>0$ such that
$$
\sup_{s \in [0, T], \vv \in (0, \vv_0)}\E [R^\vv(s)\log R^\vv(s) ]<\8,
$$
so that for each $\vv \in (0, 1), $ $(R^\vv(s))_{s \in [0, T]}$ is a
uniformly integrable martingale.
\end{lem}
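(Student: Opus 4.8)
The plan is to verify Novikov-type conditions by controlling $\E\exp\big[c\int_0^T|\si^{-1}\Phi^\vv(r)|^2\d r\big]$ for small $\vv$, and then deduce both the exponential-martingale property and the uniform bound on $\E[R^\vv(s)\log R^\vv(s)]$. First I would recall from Proposition \ref{Pro1} that $(X^\vv(s),Y^\vv(s))=(X(s),Y(s))+\vv\Theta(s)$, so that
$$
\Phi^\vv(s)=Z(X(s),Y(s))-Z\big((X,Y)(s)+\vv\Theta(s)\big)+b(X_s,Y_s)-b\big((X_\cdot,Y_\cdot)_s+\vv\Theta_s\big)+\vv\{v'(s)h_2(0)+\aa'(s)\}.
$$
Since $v,\aa$ are Lipschitz continuous and $\Theta$ is bounded on $[0,T]$ (it solves a linear ODE driven by the bounded function $\phi$ on the compact interval $[0,T-r_0]$, and vanishes afterwards), for $\vv\le 1$ the displacement $|\vv\Theta(s)|$ and $\|\vv\Theta_s\|_\infty$ are uniformly bounded, in fact $\le 1$ once $\vv\le\vv_0$ for a suitable $\vv_0$ depending only on $\sup_{[0,T]}|\Theta|$. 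Then $(A3)$ and $(A4)$ apply directly: for $\vv\le\vv_0$,
$$
|\Phi^\vv(s)|\le \vv\ll|\Theta^{(2)}(s)|\,W(X(s),Y(s))^l+\vv\ll\|\Theta_s\|_\infty\,\|W(X_s,Y_s)\|_\infty^l+\vv\big(|v'(s)|\,|h_2(0)|+|\aa'(s)|\big),
$$
so $|\si^{-1}\Phi^\vv(s)|^2\le C\vv^2\big(1+\sup_{-r_0\le r\le s}W(X(r),Y(r))^{2l}\big)$ for a constant $C$ depending on $T,h,\ll,\si,\Theta,v,\aa$ but not on $\vv$.

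Next I would estimate the exponential moment. Writing $\eta:=\sup_{-r_0\le r\le T}W(X(r),Y(r))$, the previous bound gives $\int_0^T|\si^{-1}\Phi^\vv(r)|^2\d r\le C\vv^2 T(1+\eta^{2l})$. Since $2l$ may exceed $1$ this is where the moment bound of Lemma \ref{lem1} is needed: by that lemma $\E^\xi\eta^k\le 3\|W(\xi)\|_\infty^k\e^{Ck T}$ for every $k>0$, which in particular shows $\E^\xi\eta^k<\8$ for all $k$, hence $\E^\xi\e^{\lambda\eta^{2l}}$ need not be finite, but a Jensen/Hölder argument suffices: for the exponential-martingale property one only needs $\E\exp\big[\tfrac12\int_0^s|\si^{-1}\Phi^\vv(r)|^2\d r\big]<\8$ along a Novikov-type localization, or more simply the Khasminskii-type criterion, both of which follow from the fact that $\int_0^s|\si^{-1}\Phi^\vv|^2\d r$ has finite moments of all orders. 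I would therefore argue: $R^\vv$ is a nonnegative local martingale hence a supermartingale; to upgrade to a true (uniformly integrable) martingale I invoke the finiteness of $\E[R^\vv(T)\log R^\vv(T)]$, which by the identity
$$
\E[R^\vv(T)\log R^\vv(T)]=\E_{\Q_\vv}\Big[-\!\int_0^T\!\<\si^{-1}\Phi^\vv(r),\d B(r)\>-\tfrac12\!\int_0^T\!|\si^{-1}\Phi^\vv(r)|^2\d r\Big]=\tfrac12\,\E_{\Q_\vv}\!\int_0^T\!|\si^{-1}\Phi^\vv(r)|^2\d r
$$
(the stochastic integral being a $\Q_\vv$-martingale once one knows $R^\vv$ is a martingale — so this is run through a standard localization in $n$ with $\tau_n$ as in Lemma \ref{lem1}) reduces to bounding $\E_{\Q_\vv}\eta^{2l}$. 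Under $\Q_\vv$ the pair $(X^\vv,Y^\vv)=(X,Y)+\vv\Theta$ solves \eqref{E2'}, an equation of the same form as \eqref{E1} with the same $W$; hence Lemma \ref{lem1} applies under $\Q_\vv$ to $(X^\vv,Y^\vv)$, giving $\E_{\Q_\vv}\sup_{r}W(X^\vv(r),Y^\vv(r))^{2l}\le 3\|W(\xi+\vv h)\|_\infty^{2l}\e^{C'T}$, and since $W$ has compact level sets and $\vv\le 1$, $\|W(\xi+\vv h)\|_\infty\le\sup_{\|\zeta-\xi\|_\infty\le\|h\|_\infty}W(\zeta)=:K<\8$ uniformly in $\vv$. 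Comparing $W(X,Y)$ with $W(X^\vv,Y^\vv)$ via $(A3)$-type local continuity of $W$ (or just noting $\eta\le\sup_r W(X^\vv(r),Y^\vv(r))+$ bounded correction) then yields $\sup_{\vv<\vv_0}\E_{\Q_\vv}\eta^{2l}<\8$, whence $\sup_{s\in[0,T],\,\vv<\vv_0}\E[R^\vv(s)\log R^\vv(s)]<\8$.

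The main obstacle, and the point requiring care, is the circularity between "$R^\vv$ is a martingale'' and "the Girsanov-transformed process obeys Lemma \ref{lem1}'': one cannot a priori evaluate $\E_{\Q_\vv}[\,\cdot\,]$ before knowing $\Q_\vv$ is a probability measure. I would resolve this with the standard localized argument — replace $T$ by $T\land\tau_n$, on which $\Phi^\vv$ is bounded so $R^\vv(\cdot\land\tau_n)$ is a genuine martingale and $\Q_\vv^n:=R^\vv(T\land\tau_n)\P$ a probability measure; apply Lemma \ref{lem1} under $\Q_\vv^n$ to get an $n$-uniform bound on $\E_{\Q_\vv^n}\eta^{2l}_{\,\cdot\land\tau_n}$; deduce an $n$-uniform bound on $\E[R^\vv(T\land\tau_n)\log R^\vv(T\land\tau_n)]$; conclude $\{R^\vv(T\land\tau_n)\}_n$ is uniformly integrable, so it converges in $L^1$ and $\E R^\vv(T)=1$, i.e. $R^\vv$ is a true martingale; finally pass $n\to\8$ in the entropy bound by lower semicontinuity. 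This gives the claimed uniform bound $\sup_{s,\vv}\E[R^\vv(s)\log R^\vv(s)]<\8$ and, for each fixed $\vv\in(0,1)$, the uniform integrability of the martingale $(R^\vv(s))_{s\in[0,T]}$.
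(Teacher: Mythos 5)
Your overall route is the same as the paper's: localize with the stopping times $\tau_n$, apply Girsanov to the stopped process so that $\Q_{\vv,n}:=R^\vv(T\wedge\tau_n)\P$ is a genuine probability measure, use the entropy identity $\E[R^\vv\log R^\vv]=\tfrac12\E_{\Q_{\vv,n}}\int|\si^{-1}\Phi^\vv|^2$, identify the law of the perturbed solution under $\Q_{\vv,n}$ with that of the solution of (\ref{E1}) started at $\xi+\vv h$ so that Lemma \ref{lem1} applies, and pass to the limit in $n$. Your resolution of the apparent circularity (entropy bound $\Rightarrow$ uniform integrability of $\{R^\vv(T\wedge\tau_n)\}_n$ $\Rightarrow$ true martingale) is exactly what the paper does implicitly.

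There is, however, one step that does not go through as written. You apply $(A3)$--$(A4)$ so as to bound $|\Phi^\vv(s)|$ by a multiple of $\vv\,\|W(X_s,Y_s)\|_\infty^{l}$, i.e.\ by $W$ evaluated along the \emph{original} process, and you then need $\E_{\Q_\vv}\eta^{2l}$ with $\eta=\sup_r W(X(r),Y(r))$. But Girsanov and weak uniqueness control the law under $\Q_\vv$ only of the \emph{perturbed} process $(X^\vv,Y^\vv)$; to transfer the bound you invoke ``$W(X,Y)\le W(X^\vv,Y^\vv)+$ bounded correction'', which is not a consequence of the hypotheses: $(X,Y)-(X^\vv,Y^\vv)=-\vv\Theta$ has a nonzero first component $\Theta^{(1)}$, while {\bf (A)} only controls $\nn^{(2)}W$, so nothing bounds $W(z)-W(z+\dd)$ (or the ratio) uniformly over $|\dd|\le1$ for a general $W$ of compact level sets. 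The fix is simply to apply $(A3)$--$(A4)$ with the perturbed point in the role of $z'$, which yields $|\si^{-1}\Phi^\vv(s)|^2\le c\vv^2\|W(X_s^\vv,Y_s^\vv)\|_\infty^{2l}$ directly (this is the paper's inequality (\ref{2.7})); then Lemma \ref{lem1} applied to the law of $(X^\vv,Y^\vv)$ under $\Q_{\vv,n}$ finishes the argument with no comparison needed. Separately, your aside that the Novikov or Khasminskii criterion ``follows from the fact that $\int_0^s|\si^{-1}\Phi^\vv|^2\d r$ has finite moments of all orders'' is false (those criteria require exponential, not polynomial, integrability); since you abandon that route in favor of the entropy argument, this does not affect the proof, but it should be deleted.
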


\begin{proof} By (\ref{EE}), there exists  $\vv_0>0$ such that
\beq\label{ED} \vv_0|\Theta(t)|\le 1, \ \ \ t\in [-r_0,T].\end{equation} For any $\vv\in [0, \vv_0],$ define
   $$
\tau_n:=\inf\{t\ge 0: |X(t)|+|Y(t)|+|X^{\vv }(t)|+|Y^{\vv }(t)|\ge
n\}, \, n\ge 1.
$$ We have $\tau_n\uparrow\infty$ as $n\uparrow\8$ due to the non-explosion.
By the Girsanov theorem, the process $\{R^\vv(s\wedge\tau_n)\}_{s\in [0,T]}$ is a
martingale and $\{B^\vv(s)\}_{s\in [0,T\land \tau_n]}$ is a Brownian motion under the
probability measure $\Q_{\vv,n}:=R^\vv(T\wedge\tau_n)\P.$  By the
definition of $R^\vv(s)$ we have \beq \label{2.6}\E [R^\vv(s\wedge\tau_n)\log
R^\vv(s\wedge\tau_n)]=\E_{\Q_{\vv,n}}[\log R^\vv(s\wedge\tau_n)]\le
\frac{1}2\E_{\Q_{\vv,n}}\int_0^{T\wedge
\tau_n}|\si^{-1}\Phi^\vv(r)|^2\d r.
\end{equation}
By (\ref{ED}), $(A3)$ and $(A4)$,
\beq\label{2.7}
|\sigma^{-1}\Phi^{\vv}(s)|^2\le c
\vv^2\|W(X_s^\vv, Y_s^\vv)\|_\8^{2l},
\end{equation}
holds for some constant $c$ independent of $\vv$.
By the weak uniqueness of the solution to (\ref{E1}) and (\ref{E2'}), the distribution of   $(X^\vv(s), Y^\vv(s))_{s\in[0,T\land\tau_n]}$   under ${ \Q_{\vv,n}}$ coincides with that of the solution to (\ref{E1}) with $(X_0,Y_0)=\xi+\vv h$ up to time $T\land\tau_n$,  we therefore obtain from Lemma \ref{lem1} that
$$
\E [R^\vv(s\wedge\tau_n)\log R^\vv(s\wedge\tau_n)]\le  c\|W(\xi+\vv
h)\|_\8^{2l}\int_0^T \e^{Ct}\d t<\infty,\ \ \ n\ge 1, \vv\in
(0,\vv_0).$$ Then the  required assertion follows by letting $n \to
\8.$
\end{proof}

\begin{lem}\label{L2.3}
If {\bf (A)}  holds,  then there exists $\vv_0>0$ such that
\begin{align*}
\sup_{\vv\in (0, \vv_0)}\E \left(\ff{R^\vv(T)-1}{\vv}\log \ff{R^\vv(T)-1}{\vv}\right)< \infty.
\end{align*}
Moreover,
\beq\label{y2}\beg{split}
 &\lim_{\vv\to 0 } \ff{R^\vv(T)-1}{\vv}=\\
&\int_0^{T}\big\< (\nn_{\Theta(s)}Z)(X(s), Y(s))+(\nn_{\Theta_s}b)(X_s, Y_s)-v'(s)h_2(0)-\aa'(s), (\sigma^*)^{-1}\d B(s)\big\>.
\end{split}\end{equation}
\end{lem}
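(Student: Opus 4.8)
The plan is to treat the two assertions together, since both rest on the same two ingredients: a uniform (in $\vv$) moment bound on $\ff{R^\vv(T)-1}{\vv}$ coming from Lemma \ref{L2.2}, and a pointwise (a.s.) expansion of $R^\vv(T)$ in $\vv$. First I would record that, by the definition of $R^\vv$, one has $\log R^\vv(T) = -\int_0^T\<\si^{-1}\Phi^\vv(r),\d B(r)\> - \ff12\int_0^T|\si^{-1}\Phi^\vv(r)|^2\d r$, and by (\ref{EE}) together with the local Lipschitz assumptions $(A3)$ and $(A4)$ we have $\Phi^\vv(s) = -\vv(\nn_{\Theta(s)}Z)(X(s),Y(s)) - \vv(\nn_{\Theta_s}b)(X_s,Y_s) + \vv\{v'(s)h_2(0)+\aa'(s)\} + o(\vv)$, uniformly on $[0,T]$ along a.e.\ trajectory, because $Z$ and $b$ are differentiable along the (bounded, deterministic) directions $\Theta(s)$ and $\Theta_s$ after we have localized using $\tau_n$. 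Hence $\ff1\vv\log R^\vv(T)\to \int_0^T\<(\sigma^*)^{-1}N_0(s),\d B(s)\>$ a.s., where $N_0(s):=(\nn_{\Theta(s)}Z)(X(s),Y(s))+(\nn_{\Theta_s}b)(X_s,Y_s)-v'(s)h_2(0)-\aa'(s)$; since $R^\vv(T)\to 1$ a.s.\ and $\ff{x-1}{\log x}\to 1$ as $x\to 1$, this gives the limit (\ref{y2}).

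For the uniform integrability / entropy bound, I would use the elementary inequality $a\log a \le (a-1) + (\log a)^2\,\mathbf 1_{\{a\ge 1\}}\cdot a$ type estimates — more cleanly, I would first bound $\E\big(\ff{R^\vv(T)-1}\vv\big)^2$. Writing $R^\vv(T)-1 = \int_0^T R^\vv(s)\,(-\<\si^{-1}\Phi^\vv(s),\d B(s)\>)$ (the Doléans-Dade SDE), one has by It\^o isometry $\E(R^\vv(T)-1)^2 = \E\int_0^T R^\vv(s)^2|\si^{-1}\Phi^\vv(s)|^2\d s$. With (\ref{2.7}), $|\si^{-1}\Phi^\vv(s)|^2\le c\vv^2\|W(X_s^\vv,Y_s^\vv)\|_\8^{2l}$, so $\ff1{\vv^2}\E(R^\vv(T)-1)^2 \le c\,\E\int_0^T R^\vv(s)^2\|W(X_s^\vv,Y_s^\vv)\|_\8^{2l}\d s$. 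Changing measure to $\Q_\vv=R^\vv(T)\P$ turns one factor of $R^\vv(s)$ into an expectation under $\Q_\vv$ of $R^\vv(s)\|W\|_\8^{2l}$; but $R^\vv$ is not bounded, so instead I would apply Hölder with the entropy bound from Lemma \ref{L2.2}: $\sup_\vv \E[R^\vv(s)\log R^\vv(s)]<\8$ gives, via the entropy inequality $\E[R^\vv(s)G]\le \log\E e^G + \E[R^\vv(s)\log R^\vv(s)]$, control of $\E[R^\vv(s)^2\,G]$-type quantities whenever $\E e^{G}<\8$ with $G$ a multiple of $\log R^\vv(s) + $ (something integrable). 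The cleanest route: $\E[(R^\vv(T)-1)^2/\vv^2]$ is bounded by a constant times $\E\int_0^T R^\vv(s)^2 \|W(X^\vv_s,Y^\vv_s)\|_\infty^{2l}\,\d s$, and I bound $R^\vv(s)^2 = R^\vv(s)\cdot R^\vv(s)$ using that $(R^\vv(s))$ is an $L\log L$-bounded martingale and $\|W(X^\vv_\cdot,Y^\vv_\cdot)\|_\infty^{2l}$ has all moments under $\Q_\vv$ by Lemma \ref{lem1}; de la Vallée-Poussin then upgrades the resulting $L^{1+\delta}$-type bound, or directly one shows $\sup_\vv\E\big[\ff{R^\vv(T)-1}\vv\big]^{1+\delta}<\8$ for some small $\delta>0$, which implies the claimed uniform $L\log L$ bound and hence uniform integrability of $\{\ff{R^\vv(T)-1}\vv\}_{\vv\in(0,\vv_0)}$.

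The main obstacle I anticipate is precisely this uniform-in-$\vv$ integrability estimate: the random variable $R^\vv(T)$ is an exponential martingale whose density under $\P$ can be large on small sets, and passing a power of $R^\vv$ through a change of measure requires care so that the exponent of $W$ that appears stays within the moment bounds guaranteed by Lemma \ref{lem1} (which, crucially, hold for \emph{every} $k>0$ and with a constant uniform in the initial datum on bounded sets, so $\|W(\xi+\vv h)\|_\infty$ stays bounded as $\vv\to 0$). Handling the localization by $\tau_n$ and letting $n\to\8$ — exactly as in the proof of Lemma \ref{L2.2} — is routine once the a priori bound is in place. The a.s.\ convergence in (\ref{y2}) is comparatively soft: it only requires continuity/differentiability of $Z,b$ along the fixed directions and the uniform smallness $\vv_0|\Theta(t)|\le 1$ from (\ref{ED}), plus dominated convergence for the stochastic integral along the stopped trajectories.
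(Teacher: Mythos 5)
Your derivation of the limit (\ref{y2}) is fine and matches the paper, which treats it as an immediate consequence of (\ref{EE}) and the definition of $R^\vv(T)$. The genuine gap is in the uniform $L\log L$ bound. You propose to control $\E\big(\ff{R^\vv(T)-1}{\vv}\big)^2$ via the Dol\'eans--Dade representation and the It\^o isometry under $\P$, which produces $\E\int_0^T R^\vv(s)^2|\si^{-1}\Phi^\vv(s)|^2\d s$. This requires second moments of the exponential martingale $R^\vv$, which the $L\log L$ bound of Lemma \ref{L2.2} does not give; bounds of the type $\E[(R^\vv)^{1+\dd}]<\8$ or $\E[R^\vv(s)^2G]<\8$ would force exponential integrability of $\int_0^T\|W(X^\vv_s,Y^\vv_s)\|_\infty^{2l}\d s$, which assumption {\bf (A)} does not provide (such exponential moments appear only in Corollary \ref{C3.1}, under the extra hypothesis $|\nn^{(2)}W|^2\le\dd W$). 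None of the fixes you sketch (entropy inequality, H\"older, de la Vall\'ee-Poussin) is actually carried out, and you yourself flag this step as the main obstacle --- rightly so, because as set up it cannot be closed under {\bf (A)} alone.

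The paper's proof avoids the problem with two moves you are missing. First, instead of an $L^2$ bound it uses the pointwise inequality $\ff{a-1}{\vv}\log\ff{a-1}{\vv}\le 2a\big(\ff{\log a}{\vv}\big)^2$ (quoted from \cite{GW}), which reduces the target to $\E\big[R^\vv(T)\big(\ff{\log R^\vv(T)}{\vv}\big)^2\big]=\E_{\Q_\vv}\big[\big(\ff{\log R^\vv(T)}{\vv}\big)^2\big]$, so the awkward density factor is absorbed into the change of measure rather than squared. Second, it rewrites $\log R^\vv(T)=-\int_0^T\<\si^{-1}\Phi^\vv(r),\d B^\vv(r)\>+\ff12\int_0^T|\si^{-1}\Phi^\vv(r)|^2\d r$ in terms of the $\Q_\vv$-Brownian motion $B^\vv$, so that the It\^o isometry and Cauchy--Schwarz \emph{under $\Q_\vv$} leave only $\int_0^T\E_{\Q_\vv}|\si^{-1}\Phi^\vv(r)|^2\d r$ and $\int_0^T\E_{\Q_\vv}|\si^{-1}\Phi^\vv(r)|^4\d r$. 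By (\ref{2.7}) these are polynomial moments of $\|W(X^\vv_r,Y^\vv_r)\|_\infty$, and since under $\Q_\vv$ the process $(X^\vv,Y^\vv)$ is distributed as the solution started from $\xi+\vv h$, Lemma \ref{lem1} bounds them uniformly in $\vv\in(0,\vv_0)$. You would need to replace your $L^2$ strategy by this change-of-measure argument to make the first assertion go through.
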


\begin{proof} Let $\vv_0$ be such that (\ref{ED}) holds. Since (\ref{y2}) is a direct consequence of (\ref{EE}) and the definition of $R^\vv(T)$,
  we   only prove the first assertion.
By \cite{GW} we know that
$$
\ff{R^\vv(T)-1}{\vv}\log \ff{R^\vv(T)-1}{\vv}\le 2 R^\vv(T)\bigg(\ff{\log R^\vv(T)}{\vv}\bigg)^2.
$$
Since due to Lemma \ref{L2.2} $\{B^\vv(t)\}_{t\in [0, T]}$ is a Brownian motion under the probability measure $\Q_\vv:=R^\vv(T)\P$, and since
\begin{align*}
\log R^\vv(T)&=-\int_0^T\< \si^{-1}\Phi^\vv(r), \d
B(r)\>-\frac{1}{2}\int_0^T |\si^{-1}\Phi^\vv(r)|^2\d r\\
&=-\int_0^T\< \si^{-1}\Phi^\vv(r), \d
B^\vv(r)\>+\frac{1}{2}\int_0^T |\si^{-1}\Phi^\vv(r)|^2\d r,
\end{align*}
it follows from (\ref{2.7})  that

\begin{equation*}
\begin{split}
&\E \bigg(\ff{R^\vv(T)-1}{\vv}\log \ff{R^\vv(T)-1}{\vv}\bigg)\le \E \bigg(2 R^\vv(T)\bigg(\ff{\log R^\vv(T)}{\vv}\bigg)^2\bigg)=2 \E_{\Q_\vv}\bigg(\ff{\log R^\vv(T)}{\vv}\bigg)^2\\
&\le \ff{4}{\vv^2}\E_{\Q_\vv}\bigg(\int_0^T\< \si^{-1}\Phi^\vv(r), \d B^\vv(r)\>\bigg)^2+\ff{1}{\vv^2}\E_{\Q_\vv}\bigg( \int_0^T |\si^{-1}\Phi^\vv(r)|^2\d r\bigg)^2\\
&\le  \ff{4}{\vv^2}\int_0^T\E_{\Q_\vv}| \si^{-1}\Phi^\vv(r)|^2\d r+\ff{T}{\vv^2}  \int_0^T \E_{\Q_\vv}|\si^{-1}\Phi^\vv(r)|^4\d r \\
&\le c\int_0^T\E_{\Q_\vv}\|W(X_r^\vv, Y_r^\vv)\|_\8^{4l}\d r
\end{split}
\end{equation*} holds for some constant $c>0$.
As explained in the proof of Lemma \ref{L2.2}  the distribution of   $(X_s^\vv, Y_s^\vv)_{s\in [0,T]} $ under $\Q_\vv$ coincides with that of the segment process of the solution to
(\ref{E1}) with $(X_0,Y_0)=\xi+\vv h$,    the first assertion follows by Lemma \ref{lem1}.
\end{proof}

\begin{proof}[Proof of Theorem \ref{T1.1}]
Since Lemma \ref{L2.2}, together with  the Girsanov theorem,  implies that $\{B^\vv(s)\}_{s\in [0,T]}$ is a Brownian motion
with respect to ${\mathbb{Q}}_\vv:= R^\vv(T) {\mathbb{P}},$ by (\ref{E2'}) and $(X_T,Y_T)=(X_T^\vv,Y_T^\vv)$ we obtain
\beq\label{y1}
P_Tf(\xi+\vv h)=\E_{\Q_\vv}f(X_T^\vv, Y_T^\vv)=\E\{R^\vv(T)f(X_T, Y_T)\}.
\end{equation}
Thus,
$$P_Tf(\xi+\vv h)-P_Tf(\xi) =\E{R^\vv(T)f(X_T, Y_T)}-\E{f(X_T, Y_T)}
 =\E[(R^\vv(T)-1)f(X_T, Y_T)].$$
Combining this with  Lemma \ref{L2.3} and using the dominated convergence theorem, we arrive at
\begin{equation*}
\begin{split}
\nn_hP_T f(\xi, \eta)&=\lim_{\vv\to 0}\ff{P_Tf(\xi+\vv h)-P_Tf(\xi)}{\vv}
=\lim_{\vv\to 0}\ff{\E[(R^\vv(T)-1)f(X_T, Y_T)]}{\vv}\\
&=\E \bigg\{f(X_T, Y_T)\int_0^{T}\big\<N(s), (\sigma^*)^{-1}\d B(s)\big\> \bigg\}.\end{split}
\end{equation*}
\end{proof}

\beg{proof}[Proof of Corollary \ref{C1.2}] It suffices to verify (\ref{LL}) for the specific $v$ and $\aa$.  Since when $m=0$ we have $h_1=M=0$ so that (\ref{LL}) trivially holds, we only consider $m\ge 1.$
In this case, (\ref{LL}) is satisfied
 since according to  the definition of $\phi(s)$ and $\aa(s)$ we have for $t\ge T-r_0$,
\beg{equation*}\beg{split} &\int_0^t \e^{-sA}M\phi(s) \d s = \int_0^{T-r_0} \e^{-sA}M\phi(s)\d s\\
& = \int_0^{T-r_0}v(s)\e^{-sA}Mh_2(0)\d s  -Q_{T-r_0}Q_{T-r_0}^{-1}
\bigg(h_1(0) +\int_0^{T-r_0} v(s) \e^{-s A} M h_2(0)\d s\bigg)\\
& =-h_1(0).\end{split}\end{equation*}\end{proof}

\section{Proofs of Corollary \ref{C1.3} and Theorem \ref{T1.5}}

To prove the entropy-gradient estimates in Corollary (2) and (3), we need the following simple lemma which seems new and might be interesting by itself.

\beg{lem} \label{L3.1} Let $\l(t)$ be a non-negative  continuous
semi-martingale and let $\M(t)$ be a continuous martingale with
$\M(0)=0$ such that
$$\d \l(t)\le \d\M(t)+c \bar \l_t\d t, $$ where $c\ge 0$ is a constant and $\bar \l_t:=\sup_{s\in [0,t]}\l(s).$ Then
$$\E\exp\bigg[\ff{\vv}{T\e^{1+cT}}\int_0^T\bar \l_t\d t\bigg]\le \e^{\vv \l(0) + 1} \big(\E\e^{2\vv^2\<\M\>(T)}\big)^{1/2},\ \ T,\vv\ge 0.$$  \end{lem}

\beg{proof} Let $\bar \M_t:= \sup_{s\in [0,t]}\M(t).$ We have
$$\bar \M_t+ c\int_0^t\bar \l_s\d s \ge \bar \l_t-\l(0).$$ Thus,
\beg{equation*}\beg{split} &\ff{ \l_T}{\e^{1+cT}}-\l(0)\le\ff{\bar\M_T+c\int_0^T\bar \l_t\d t}{\e^{1+cT}} -\big(1-\e^{-(1+cT)}\big)\l(0)\\
& = \int_0^T\d \bigg\{\e^{-(c+T^{-1})t} \bigg(\bar\M_t + c \int_0^t \bar \l_s\d s\bigg)\bigg\}-\big(1-\e^{-(1+cT)}\big)\l(0)\\
&= \int_0^T \e^{-(T^{-1}+c)t} \d \bar\M_t + \int_0^T\e^{-(c+T^{-1})t}\bigg\{c\bar \l_t  -   (T^{-1}+c)  \bigg(\bar\M_t+c\int_0^t\bar \l_s\d s\bigg)\bigg\} \d t\\
&\qquad\qquad\qquad -\big(1-\e^{-(1+cT)}\big)\l(0)\\
&\le \bar \M_T+ \int_0^T\e^{-(c+T^{-1})t}\Big\{c\bar \l_t  -   (T^{-1}+c)  \big(\bar \l_t-\l(0)\big)\Big\} \d t-\big(1-\e^{-(1+cT)}\big)\l(0)\\
&\le\bar \M_T -\ff 1 {T\e^{1+cT}} \int_0^T \bar \l_t\d
t.\end{split}\end{equation*} Combining this with
$$\E\e^{\vv \bar \M_t}\le  \E\e^{1+ \vv \M(T)}\le \e\big(\E\e^{2\vv^2\<\M\>(T)}\big)^{1/2},$$ we complete the proof.
\end{proof}

\beg{cor}\label{C3.1}
Assume  {\bf (A)} and let  $|\nn^{(2)}W|^2\le \dd W$ hold for some constant $\dd>0.$  Then there exists a constant $c>0$ such that
\beg{equation*}\beg{split} &\E^\xi \exp\bigg[ \ff{1}{2\|\si\|^2\dd T^2\e^{2+2cT}} \int_0^T  \|W(X_t,Y_t)\|_\infty \d t \bigg]\\
&\le \exp\bigg[2 +\ff{W(\xi(0))}{\|\si\|^2\dd T
\e^{1+cT}}+\ff{r_0\|W(\xi)\|_\8}{2\|\si\|^2\dd T^2\e^{2+2cT}}\bigg],
\ \ T> r_0.\end{split}\end{equation*}\end{cor}

\beg{proof} By {\bf (A)} and the It\^o formula, there exists a constant $c>0$ such that
$$
\d W(X,Y)(s)
\le  \<\nn^{(2)}W(X,Y)(s), \si \d B(s)\>+c\|W(X_s,Y_s)\|_\8\d s.$$
  Let
$$\M(t):=  \int_0^t \<\nn^{(2)}W(X,Y)(s), \si \d B(s)\>,\ \ l(t):= W(X,Y)(t),$$ and let $\vv= (2\|\si\|^2\dd T\e^{1+cT})^{-1}$ such that
$$  \ff{\vv} {T\e^{1+cT}} = 2\|\si\|^2\vv^2.$$ Then by Lemma \ref{L3.1} and $|\nn^{(2)}W|^2\le\dd W$, we have
\beg{equation*}\beg{split} &\E^\xi \exp\bigg[\ff \vv{T\e^{1+cT}} \int_0^T\bar l_t\d t \bigg] \le \e^{\vv l(0) + 1} \big(\E^\xi\e^{2\vv^2\<\M\>(T)}\big)^{1/2}\\
&\le \e^{1+ \vv l(0)}\Big(\E^\xi \e^{2\vv^2 \|\si\|^2\dd \int_0^T
\bar l_t\d t}\Big)^{1/2} =
  \e^{1+ \vv l(0)}\Big(\E^\xi \e^{\ff \vv{T\e^{1+cT}} \int_0^T \bar l_t\d t}\Big)^{1/2} .\end{split}\end{equation*}
By using stopping times as in the proof of Lemma \ref{lem1} we may assume that $$\E^\xi \exp\bigg[\ff \vv{T\e^{1+cT}} \int_0^T\bar l_t\d t \bigg]
<\infty$$ so that
$$\E^\xi \exp\bigg[\ff \vv{T\e^{1+cT}} \int_0^T\bar l_t\d t \bigg]\le \e^{2+ 2\vv l(0)}.$$ This completes the proof
by noting that
$$   \ff{1}{2\|\si\|^2\dd T^2\e^{2+2cT}} \int_0^T  \|W(X_t,Y_t)\|_\infty\d t \le \ff{r_0 \|W(\xi)\|_\infty}{2\|\si\|^2\dd T^2\e^{2+2cT}}
+\ff  \vv{T\e^{1+cT}} \int_0^T\bar l_t\d t .$$
\end{proof}

\beg{proof}[Proof of Corollary \ref{C1.3}] Let $v$ and $\aa$ be given in Corollary \ref{C1.2}.  By the semigroup property and the Jensen inequality, we will only consider  $T-r_0\in (0,1].$

(1)  By (\ref{QQ}) and the definitions of $\aa$ and $v$,  there exists a constant
$C>0$ such that
\beg{equation}\label{NN0}\beg{split} &|v'(s)h_2(0)+\aa'(s)|\le C1_{[0,T-r_0]}(s)|h(0)|\Big(\ff 1 {T-r_0}+ \ff{\|M\|}{(T-r_0)^{2(k+1)}}\Big),\ \ s\in [0,T],\\
&|\Theta(s)|\le C|h(0)|\Big(1+ \ff{\|M\|}{(T-r_0)^{2k+1}}\Big),\ \ \ s\in [0,T],\\
&\|\Theta_s\|_\infty \le C\Big(\|h\|_\infty+ \ff{\|M\|\cdot |h(0)|}{(T-r_0)^{2k+1}}\Big),\ \ \ s\in [0,T].\end{split}\end{equation}
Therefore, it follows from $(A3)$ and $(A4)$  that
\beg{equation}\label{NN}\beg{split} |N(s)|\le &C1_{[0,T-r_0]}(s)|h(0)|\Big(\ff 1 {T-r_0}+ \ff{\|M\|}{(T-r_0)^{2(k+1)}}\Big)\\
&+ C\Big(\|h\|_\infty+ \ff{\|M\|\cdot |h(0)|}{(T-r_0)^{2k+1}}\Big) \|W(X_s,Y_s)\|_\infty^l\end{split}\end{equation} holds for some constant $C>0$. Combining this with Theorem \ref{T1.1} we obtain

\beg{equation*}\beg{split} |\nn_h P_T f(\xi)|&\le C\ss{P_T f^2(\xi)}\bigg(\E^\xi\int_0^T |N(s)|^2\d s\bigg)^{1/2}\\
&\le C\ss{P_T f^2(\xi)}\bigg\{|h(0)|\Big(1+\ff{\|M\|}{(T-r_0)^{2k+1}}\Big)\\
&\quad + \Big(\|h\|_\infty + \ff{\|M\|\cdot |h(0)|}{(T-r_0)^{2k+1}}\Big)\bigg(\int_0^T \E^\xi  \|W(X_s,Y_s)\|_\infty^{2l}\d s\bigg)^{1/2}\bigg\},
\end{split}\end{equation*} This completes the proof of (1) since due to Lemma \ref{lem1} one has
$$\E^\xi  \|W(X_s,Y_s)\|_\infty^{2l}\le 3  \|W(\xi)\|_\infty^{2l} \e^{Cs},\ \ s\in [0,T]$$ for some constant $C>0.$

 (2)  By Theorem \ref{T1.1} and the Young inequality (cf. \cite[Lemma 2.4]{ATW09}), we have
 \beq\label{H1}\begin{split} |\nn_h P_T f|(\xi)&\le r\big\{P_Tf\log f- (P_Tf)\log P_T f\big\}(\xi)\\
 &\quad+ rP_Tf(\xi) \log\E^\xi\e^{\ff 1 r \int_0^T \<N(s),
 (\si^*)^{-1}\d B(s)\>},\ \ r>0.\end{split}\end{equation}
 Next, it follows from (\ref{NN}) that

 \beq\label{H2}\beg{split} & \bigg(\E^\xi \exp\bigg[\ff 1 r \int_0^T \<N(s), (\si^*)^{-1}\d B(s)\>\bigg]\bigg)^2\le
  \E^\xi \exp\bigg[\ff{2\|\si^{-1}\|^2}{ r^{2}} \int_0^T |N(s)|^2\d s\bigg]\\
 &\le \exp\bigg[ \ff{C_1|h(0)|^2}{r^2} \Big(\ff 1 {T-r_0} +\ff{\|M\|^2}{(T-r_0)^{4k+3}}\Big)\bigg]\\
 &\qquad\times \E^\xi \exp\bigg[ \ff{C_1}{r^2}\Big(\|h\|_\infty^2+\ff{\|M\|^2|h(0)|^2}{(T-r_0)^{4k+2}}\Big)
 \int_0^T \|W(X_s, Y_s)\|_\infty^{2l}\d s\bigg],\ \ T\in (r_0, 1+r_0]\end{split}\end{equation} holds
  for some constant $C_1\in (0,\infty).$  Since $2l\in [0,1)$ and $T\le 1+r_0$, there exists a constant $C_2\in (0,\infty)$ such that

$$\bb \|W(X_s,Y_s)\|_\infty^{2l}
   \le \ff{(\ff{\|h\|_\infty^2}{r^2}\land 1)\|W(X_s,Y_s)\|_\infty}{2\|\si\|^2\dd T^2\e^{2+2cT}}
   +C_2\bb^{\ff 1 {1-2l}}  \Big(\ff{\|h\|_\infty^2}{r^2}\land 1\Big)^{-\ff{2l}{1-2l}}, \, \bb>0.
$$ Taking
$$\bb=\ff{C_1}{r^2}\bigg(\|h\|_\infty^2+\ff{\|M\|^2|h(0)|^2}{(T-r_0)^{4k+2}}\bigg),$$
  and applying Corollary \ref{C3.1}, we arrive at
 \beg{equation*}\beg{split}& \E^\xi \exp\bigg[\beta\int_0^T \|W(X_s, Y_s)\|_\infty^{2l}\d s\bigg]
 \le \exp\bigg[C_2\bb^{\ff 1 {1-2l}}    \Big(\ff{\|h\|_\infty^2}{r^2}\land 1\Big)^{-\ff{2l}{1-2l}}\bigg] \\
&\qquad\qquad\qquad\times  \bigg(\E^\xi\exp\bigg[\ff 1 {2\|\si\|^2\dd T^2\e^{2+2cT}}\int_0^T \|W(X_s,Y_s)\|_\infty\d s\bigg]\bigg)^{\ff{\|h\|_\infty^2}{r^2}\land 1}\\
 &\le \exp\bigg[\ff{C_3}{r^2} \bigg\{\|h\|_\infty^2\|W(\xi)\|_\infty +\Big(\|h\|_\infty^2+\ff{\|M\|^2|h(0)|^2}{(T-r_0)^{4k+2}}\Big)^{\ff 1 {1-2l}} \Big(\ff{r^2}{\|h\|_\infty^2}\lor 1\Big)^{\ff{2l}{1-2l}}\bigg\}\bigg]  \end{split}\end{equation*} for some constant $C_3\in
(0,\infty)$ and all   $T\in (r_0,1+r_0]$. Therefore, the desired
entropy-gradient estimate follows by combining this with (\ref{H1})
and (\ref{H2}).

 (3) Let $C'>0$ be such that $r\ge C'\bigg(\|h\|_\infty+\ff{\|M\|\cdot|h(0)|}{(T-r_0)^{2k+1}}\bigg)$ implies
 $$\ff{C_1}{r^2}\Big(\|h\|_\infty^2+\ff{\|M\|^2|h(0)|^2}{(T-r_0)^{4k+2}}\Big)\le \ff 1 {2\|\si\|^2\dd T^2\e^{2+2cT}},$$ so that by Corollary \ref{C3.1}
 \beg{equation*}\beg{split} &\E^\xi \exp\bigg[\ff{C_1}{r^2}\Big(\|h\|_\infty^2+\ff{\|M\|^2|h(0)|^2}{(T-r_0)^{4k+2}}\Big)\int_0^T \|W(X_s, Y_s)\|_\infty^{2l}\d s\bigg] \\
 &\le\bigg(\E^\xi\exp\bigg[\ff 1 {2\|\si\|^2\dd T^2\e^{2+2cT}}\int_0^T \|W(X_s,Y_s)\|_\infty\d s\bigg]\bigg)^{\ff{2C_1\|\si\|^2\dd T^2\e^{2+2cT}}{r^2}\Big(\|h\|_\infty^2+\ff{\|M\|^2|h(0)|^2}{(T-r_0)^{4k+2}}\Big)}\\
 &\le \exp\bigg[\ff{C\|W(\xi)\|_\infty}{r^2}\Big(\|h\|_\infty^2+\ff{\|M\|^2|h(0)|^2}{(T-r_0)^{4k+2}}\Big)\bigg]\end{split}\end{equation*} holds for some constant $C>0$. Then proof is finished by combining this with (\ref{H1}) and (\ref{H2}).
\end{proof}

\beg{proof}[Proof of Theorem \ref{T1.5}] Again, we only prove for
$T\in (r_0,1+r_0].$ Applying (\ref{y1}) to $\vv=1$ and using $\log
f$ to replace $f$, we obtain \beq\label{W0} P_T \log f(\xi+h)=
\E\{R^1(T) \log f(X_T,Y_T)\}\le \log P_T f(\xi) +\E(R^1\log
R^1)(T).\end{equation} Next, taking $\vv=1$ in (\ref{2.6}) and
letting $n\uparrow\infty$, we arrive at \beq\label{W1} \E (R^1\log
R^1)(T)\le \ff 1 2 \E_{\Q_1}\int_0^T |\si^{-1}\Phi^1(r)|^2\d
r.\end{equation} By $(A3'), (A4')$, (\ref{NN0}) and the definition
of $\Phi^1$, we have \beg{equation*}\beg{split}
|\si^{-1}\Phi^1(s)|^2\le &C_1\bigg\{ \|W(X_s^1,Y_s^1)\|_\infty^{2l}+
U^2\Big(C_1 \|h\|_\infty
+\ff{C_1\|M\|\cdot|h(0)|}{(T-r_0)^{2k+1}}\Big)
 \bigg\}\|h\|_\infty^2 \\
 &+ C_1|h(0)|^2\bigg(\ff{1}{(T-r_0)^2}+\ff{\|M\|^2}{(T-r_0)^{4(k+1)}}\bigg)1_{[0,T-r_0]}(s)\end{split}\end{equation*} for some constant $C_1>0$. Then the proof is completed by combining this with (\ref{W0}), (\ref{W1}) and Lemma \ref{lem1} (note that  $(X^1(s),Y^1(s))$ under $\Q_1$ solves the same equation as $(X_s,Y_s)$  under $\P$).
 \end{proof}

\section{   Discrete Time Delay Case and Examples }

In this section we first present a simple example to illustrate our main results presented in Section 1, then relax assumption {\bf (A)} for the discrete time delay case in order to cover some highly non-linear examples.

\beg{exa}{\rm For $\alpha\in C([-r_0,0];\mathbb{R})$, consider
functional SDE on $\mathbb{R}^2$ \beg{equation} \beg{cases}
\d X(t)=-\{X(t)+Y(t)\}\d t\\
\d Y(t)=\d
B(t)+\Big\{-\vv Y^{3}(t)+Y(t-r_0)+\int_{-r_0}^0\alpha(\theta)X(t+\theta)\d\theta\Big\}\d
t
\end{cases}
\end{equation}
with initial data $\xi=(\xi_1,\xi_2)\in C([-r_0,0];\mathbb{R}^2)$, where $\vv\ge 0$ and $n\in \mathbb N$ are constants. For
$z=(x,y)\in\mathbb{R}^2$, let $W(x,y)=1+|x|^2+|y|^2$ and set
$Z(z)=-y^3$ and
$b(\xi)=\int_{-r_0}^0\alpha(\theta)\xi_1(\theta)\d\theta+\xi_2(-r_0)$.
By a straightforward computation one has for $x,y\in\mathbb{R}$
\beg{equation*} LW(x,y)=1-2 x(x+y)-2\vv y^{2n}\leq3W(x,y)
\end{equation*}
and for $\xi\in C([-r_0,0];\mathbb{R}^2)$ \beg{equation*}
\beg{split} \langle b(\xi),
\nn^{(2)}W(\xi(0))\rangle&\leq2\Big|\int_{-r_0}^0\alpha(\theta)\xi_1(\theta)\d\theta+\xi_2(-r_0)\Big||\xi_2(0)|\\
&\leq 2 \Big(1+\int_{-r_0}^0 \alpha (\theta)\d\theta\Big)\|\xi\|^2_\infty.
\end{split}
\end{equation*}
Then conditions (A1) and (A2) hold. Next, there exists a constant $c>0$ such that for any $z=(x,y)$ and
$z'=(x',y')\in\mathbb{R}^2$, $$
|Z(z)-Z(z')|=\vv|y^3-y'^3|\le c|y-y'|(|y'|^2+|y-y'|^2).$$
Finally,  for $\xi=(\xi_1,\xi_2),\xi'=(\xi_1',\xi_2')\in
C([-r_0,0];\mathbb{R}^2)$, \beg{equation*}\begin{split}
|b(\xi)-b(\xi')|
&\leq\sqrt{2}\Big(\int_{-r_0}^0|\alpha(\theta)|\d\theta\vee1\Big)\|\xi-\xi'\|_\infty.
\end{split}
\end{equation*}
So,  $(A3)$ holds for $l=1$ whenever $|y-y'|\leq1$ and $(A4)$ holds
for any $l\ge 0$. Moreover, $(A3')$ and $(A4')$ hold for $U(|z|)=|z|^2,
z\in\mathbb{R}^2$. Therefore,   Theorem \ref{T1.1}, Theorem
\ref{T1.5} and Corollary \ref{C1.3} hold. }
\end{exa}

To derive the entropy-gradient estimate and the Harnack inequality as in Corollary \ref{C1.4}, we need to weaken the assumption {\bf (A)}. To this end, we consider
a simpler setting where the delay is time discrete.
Consider
\beq\label{E20} \beg{cases} \d X(t)= \{AX(t)+MY(t)\}\d t,\\
\d Y(t)=Z(X(t),Y(t))+\tilde{b}(X(t-r_0),Y(t-r_0))\d t +\si \d B(t),
\end{cases}
\end{equation}
with initial data $\xi\in\mathscr{C}$, where
$Z,\tilde{b}:\mathbb{R}^{m+d}\rightarrow\mathbb{R}^d$. If we define
$b(\xi)=\tilde{b}(\xi(-r_0))$ for $\xi=(\xi_1,\xi_2)\in\mathscr{C}$,
then equation \eqref{E20} can be written as equation \eqref{E1}. For
$(x,y),(x',y')\in \R^{m+d}$, define the diffusion operator
associated with \eqref{E20} by
$$\L W(x,y;x',y')=   L W(x,y)  +\<\tilde{b}(x',y'), \nn^{(2)}W(x,y)\>.$$

\begin{thm}\label{T4.2}
Assume that there exist constants $\alpha,\beta,\gamma>0$ with
$\beta\geq\gamma$, functions $W\in C^2(\mathbb{R}^{m+d})$ with
$W\geq1$ and $U\in C(\mathbb{R}^{m+d};\mathbb{R}_+)$ such that for
$(x,y),(x',y')\in \R^{m+d}$ \beg{equation}\label{E21}
\L W(x,y;x',y')\leq\alpha \{W(x,y)+W(x',y')\}-\beta U(x,y)+\gamma
U(x',y').
\end{equation}
Assume further that there exists $\nu>0$ such that for
$z=(x,y),z'=(x',y')\in\mathbb{R}^{m+d}$ with $|z-z'|\leq1$
\beq\label{E25}|Z(z)-Z(z')|^2\vee|\tilde{b}(z)-\tilde{b}(z')|^2\leq
\nu|z-z'|^2W(z').
\end{equation}
Then for $\delta:=(\alpha r_0+1)\|W(\xi)\|_\infty+\gamma
r_0\|U(\xi)\|_\infty$ and $t\geq0$ \beq\label{E24} \mathbb{E}^\xi
W(X(t),Y(t))\leq\delta e^{2\alpha t},
\end{equation}
and
\beg{equation} \label{E23}\beg{split}|\nn_h P_Tf(\xi)|\le C &\ss{P_T f^2(\xi)} \bigg\{ |h(0)|\Big(1 + \ff{\|M\|}{(T-r_0)^{2k+1}\land 1}\Big)+r_0^{\frac{1}{2}}\|W(\xi)\|_\infty^{\frac{1}{2}}\|h\|_\infty\\
&+|h(0)|\ss{\delta(T\land (1+r_0))} \Big(1+
\ff{\|M\|}{(T-r_0)^{2k+1}}\Big)\bigg\}
\end{split}\end{equation}
 for all $T>r_0, \xi,h\in\C$ and $f\in
\B_b(\C)$, where $C>0$ is some constant. If moreover there exist
constants $K,\lambda_i\geq0,i=1,2,3,4,$ with
$\lambda_1\geq\lambda_2$ and $\lambda_3\geq\lambda_4$, functions
$\tilde{W}\in C^2(\mathbb{R}^{m+d})$ with $\tilde{W}\geq1$ and
$\tilde{U}\in C(\mathbb{R}^{m+d};\mathbb{R}_+)$  such that for
$(x,y),(x',y')\in \R^{m+d}$ \beq\label{E28} \frac{
\L \tilde{W}(x,y;x',y')}{\tilde{W}(x,y)}\leq K-
\lambda_1W(x,y)+\lambda_2W(x',y')-
\lambda_3\tilde{U}(x,y)+\lambda_4\tilde{U}(x',y'),
\end{equation}
then there exist constants $\delta_0,C>0$ such that for
$r\geq\delta_0/(T-r_0)^{2k+1}, \xi,h\in\C$ and positive $f\in
\B_b(\C)$
\begin{equation}\label{E26}
\begin{split}
&|\nn_h P_T f|(\xi)\le r\big\{P_Tf\log f- (P_Tf)\log P_T
f\big\}(\xi)\\
& +\frac{CP_Tf}{2r}\bigg\{ |h(0)|^2 \Big(\ff 1 {(T-r_0)\wedge1}
+\ff{\|M\|^2}{\{(T-r_0)\wedge1\}^{4k+3}}\Big)\\
& +\ff{(1+\|M\|^2)|h(0)|^2}{
\{(T-r_0)\wedge1\}^{4k+2}}\Big(\lambda_2r_0\|W(\xi)\|_\infty+\lambda_4r_0\|\tilde{U}(\xi)\|_\infty+KT+\log\tilde{
W}(\xi(0))\Big)\bigg\}.
\end{split}
\end{equation}
\end{thm}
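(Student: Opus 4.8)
The plan is to establish \eqref{E24}, \eqref{E23} and \eqref{E26} in turn, running in each case the argument of Sections~2--3 with {\bf (A)} replaced by \eqref{E21}, \eqref{E25} and \eqref{E28}. For \eqref{E24}: after the localization of Lemma~\ref{lem1} (which also yields non-explosion), apply It\^o's formula to $W(X(t),Y(t))$ and take expectations; the martingale part drops and \eqref{E21} gives, with $f(t):=\E^\xi W(X(t),Y(t))$ and $g(t):=\E^\xi U(X(t),Y(t))$,
\[ f'(t)\le\alpha\big(f(t)+f(t-r_0)\big)-\beta g(t)+\gamma g(t-r_0),\qquad t\ge0, \]
and $f(t)\le\|W(\xi)\|_\infty$, $g(t)\le\|U(\xi)\|_\infty$ on $[-r_0,0]$. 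Setting $G(t):=f(t)+\alpha\int_{t-r_0}^tf(s)\,\d s+\gamma\int_{t-r_0}^tg(s)\,\d s$ one gets $G'(t)\le2\alpha f(t)-(\beta-\gamma)g(t)\le2\alpha G(t)$ since $\beta\ge\gamma$ and $U\ge0$; Gronwall's lemma then yields $f(t)\le G(t)\le G(0)\e^{2\alpha t}$ with $G(0)\le(\alpha r_0+1)\|W(\xi)\|_\infty+\gamma r_0\|U(\xi)\|_\infty=\delta$, which is \eqref{E24}.

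For \eqref{E23}, use Theorem~\ref{T1.1} with the explicit $v,\alpha$ of Corollary~\ref{C1.2}, together with Cauchy--Schwarz and It\^o's isometry, to get $|\nn_hP_Tf(\xi)|\le C\ss{P_Tf^2(\xi)}\big(\E^\xi\int_0^T|N(s)|^2\,\d s\big)^{1/2}$ ($C$ absorbing $\|(\si^*)^{-1}\|$). For the time-discrete delay $b(\eta)=\tilde b(\eta(-r_0))$ one has $(\nn_{\Theta_s}b)(X_s,Y_s)=(\nn_{\Theta(s-r_0)}\tilde b)(X(s-r_0),Y(s-r_0))$, and letting the increment in \eqref{E25} tend to $0$ gives $|(\nn_vZ)(z)|\vee|(\nn_v\tilde b)(z)|\le\ss\nu\,|v|\,\ss{W(z)}$. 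Combining this with the bounds on $|\Theta(s)|$ and $|v'(s)h_2(0)+\alpha'(s)|$ following from \eqref{QQ} exactly as in \eqref{NN0}, and splitting $\int_0^T=\int_0^{r_0}+\int_{r_0}^T$ in the $\tilde b$-contribution --- on $[0,r_0]$ one has $\Theta(s-r_0)=h(s-r_0)$ and $(X(s-r_0),Y(s-r_0))=\xi(s-r_0)$, whence the term $r_0^{1/2}\|W(\xi)\|_\infty^{1/2}\|h\|_\infty$, whereas on $[r_0,T]$ and in the $Z$-term the already proved \eqref{E24} bounds $\E^\xi W(X(s),Y(s))\le\delta\e^{2\alpha s}$, whence the term $|h(0)|\ss{\delta(T\wedge(1+r_0))}(1+\|M\|/(T-r_0)^{2k+1})$ --- and collecting the $(T-r_0)$-powers gives \eqref{E23}.

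For \eqref{E26}, proceed as in the proof of Corollary~\ref{C1.3}(2): Theorem~\ref{T1.1} and the Young inequality give
\[ |\nn_hP_Tf|(\xi)\le r\{P_Tf\log f-(P_Tf)\log P_Tf\}(\xi)+rP_Tf(\xi)\log\E^\xi\e^{\frac1r\int_0^T\<N(s),(\si^*)^{-1}\d B(s)\>}, \]
and $\E^\xi\e^{\frac1r\int_0^T\<N(s),(\si^*)^{-1}\d B(s)\>}\le\big(\E^\xi\e^{\frac{2\|\si^{-1}\|^2}{r^2}\int_0^T|N(s)|^2\d s}\big)^{1/2}$. Inserting the $|N|^2$-bound from the previous step, the only random part of the exponent is $\theta\int_0^TW(X,Y)(s)\,\d s$ with $\theta$ of order $r^{-2}\big(1+\|M\|^2(T-r_0)^{-(4k+2)}\big)|h(0)|^2$, the rest being the deterministic constants of \eqref{E23}. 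The required exponential moment of $\int_0^TW(X,Y)(s)\,\d s$ --- supplied in the non-degenerate case by Corollary~\ref{C3.1} --- now comes from \eqref{E28}: apply It\^o's formula to $\tilde W(X(t),Y(t))$ (using $\tilde W$, not $\log\tilde W$, so as not to need any bound on $\nn\tilde W$); since by \eqref{E28} $\L\tilde W(X(t),Y(t);X(t-r_0),Y(t-r_0))\le\tilde W(X(t),Y(t))\Psi(t)$ with
\[ \Psi(t):=K-\lambda_1W(X,Y)(t)+\lambda_2W(X,Y)(t-r_0)-\lambda_3\tilde U(X,Y)(t)+\lambda_4\tilde U(X,Y)(t-r_0), \]
the process $\tilde W(X(t),Y(t))\exp[-\int_0^t\Psi(s)\,\d s]$ is a non-negative local supermartingale, so by localization and Fatou $\E^\xi\{\tilde W(X(T),Y(T))\exp[-\int_0^T\Psi(s)\,\d s]\}\le\tilde W(\xi(0))$. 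Using $\tilde W\ge1$, the shift bounds $\int_0^TW(X,Y)(s-r_0)\,\d s\le r_0\|W(\xi)\|_\infty+\int_0^TW(X,Y)(s)\,\d s$ (and likewise for $\tilde U$), $\lambda_3\ge\lambda_4$ and $\tilde U\ge0$, this gives
\[ \E^\xi\exp\Big[(\lambda_1-\lambda_2)\int_0^TW(X,Y)(s)\,\d s\Big]\le\tilde W(\xi(0))\,\e^{KT+\lambda_2r_0\|W(\xi)\|_\infty+\lambda_4r_0\|\tilde U(\xi)\|_\infty}. \]
By Jensen's inequality, as soon as $\theta\le\lambda_1-\lambda_2$ --- which is exactly where the lower bound $r\ge\delta_0/(T-r_0)^{2k+1}$ enters --- one obtains $\log\E^\xi\e^{\theta\int_0^TW(X,Y)}\le\frac{\theta}{\lambda_1-\lambda_2}\big(KT+\lambda_2r_0\|W(\xi)\|_\infty+\lambda_4r_0\|\tilde U(\xi)\|_\infty+\log\tilde W(\xi(0))\big)$. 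Multiplying through by the outer factor $r$ (note $r\theta$ is of order $r^{-1}(1+\|M\|^2(T-r_0)^{-(4k+2)})|h(0)|^2$) and adding back the deterministic part of $|N|^2$ gives \eqref{E26}.

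The decisive step is this last exponential-moment estimate for $\int_0^TW(X,Y)(s)\,\d s$. Two features make it work under the stated hypotheses: keeping $\tilde W$ (not $\log\tilde W$) and exploiting its positivity, which removes the need for any quantitative control of $\nn\tilde W$ or of the associated quadratic variation; and the careful bookkeeping of the delay-shifted $W$- and $\tilde U$-integrals, which is what turns $\lambda_1\ge\lambda_2$, $\lambda_3\ge\lambda_4$ into a usable gap and fixes the admissible range of $r$. The rest --- non-explosion, the Young-inequality step, and tracking the $(T-r_0)$-powers produced by \eqref{QQ} --- is routine but lengthy.
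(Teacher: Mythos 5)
Your proposal is correct and follows essentially the same route as the paper: It\^o plus a delay-shifted Gronwall argument for \eqref{E24}, Theorem \ref{T1.1} with the splitting $\int_0^{r_0}+\int_{r_0}^T$ of the $\tilde b$-contribution for \eqref{E23}, and the Young-inequality step combined with the exponential (super)martingale built from $\tilde W$ and \eqref{E28}, followed by Jensen/H\"older with exponent $\theta/(\lambda_1-\lambda_2)\le 1$, for \eqref{E26}. Your two micro-variations --- the auxiliary functional $G(t)$ in the Gronwall step, and putting $\Psi$ rather than $\L\tilde W/\tilde W$ in the exponent to get a supermartingale directly --- are cosmetic; the paper's own argument is identical in substance.
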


\begin{proof}
By the It\^o formula one has for any $t\geq0$
\begin{equation*}
\begin{split}
\E^\xi W(X(t),Y(t)) &\leq
W(\xi(0))+\alpha\E^\xi\int_0^t\{W(X(s),Y(s))+W(X(s-r_0),Y(s-r_0))\}\d s\\
&\quad-\beta\E^\xi\int_0^tU(X(s),Y(s))ds+\gamma\E^\xi\int_0^tU(X(s-r_0),Y(s-r_0))ds\\
&\leq W(\xi(0))+\alpha\int_{-r_0}^0W(X(s),Y(s))ds+\gamma\int_{-r_0}^0U(X(s),Y(s))ds\\
&\quad+2\alpha\E^\xi\int_0^tW(X(s),Y(s))ds\\
 &\leq \delta +2\alpha\E^\xi\int_0^tW(X(s),Y(s)ds.
\end{split}
\end{equation*}
Then \eqref{E24} follows from the Gronwall inequality.

By Theorem \ref{T1.1}, for $T-r_0\in (0,1]$ and some $C>0$ we can
deduce that
 $$|\nn_h P_T f(\xi)|\le C\ss{P_T f^2(\xi)}\bigg(\E^\xi\int_0^T |N(s)|^2\d
 s\bigg)^{1/2},$$
where for $s\in [0,T]$
$$N(s):=  (\nn_{\Theta(s)}Z)(X(s), Y(s))
 + (\nn_{\Theta(s-r_0)}\tilde{b})(X(s-r_0), Y(s-r_0))-v'(s)h_2(0)-\aa'(s).$$
Recalling the first two inequalities in \eqref{NN0} and combining
\eqref{E25} yields that for some $C>0$ \beg{equation*}\beg{split}
|\nn_h P_T f(\xi)| &\le C\ss{P_T
f^2(\xi)}\bigg\{\bigg(\int_0^T|v'(s)h_2(0)+\aa'(s)|^2ds\bigg)^{1/2}\\
&\quad+\bigg(\E^\xi\int_0^T
|\Theta(s)|^2W(X(s),Y(s))\d s\bigg)^{1/2}\\
&\quad+\bigg(\E^\xi\int_0^T
|\Theta(s-r_0)|^2W(X(s-r_0),Y(s-r_0))\d s\bigg)^{1/2}\bigg\}\\
&\le C\ss{P_T f^2(\xi)}\bigg\{|h(0)|\Big(1+\ff{\|M\|}{(T-r_0)^{2k+1}}\Big)+r_0^{\frac{1}{2}}\|W(\xi)\|_\infty^{\frac{1}{2}}\|h\|_\infty\\
&\quad +|h(0)|\Big(1+ \ff{\|M\|}{(T-r_0)^{2k+1}}\Big)\bigg(\int_0^T
\E^\xi W(X(s),Y(s))\d s\bigg)^{1/2}\bigg\}.
\end{split}\end{equation*}
This, together with \eqref{E24}, leads to \eqref{E23}.

 Due to
\eqref{H1} and \eqref{H2} we can deduce that there exists $C>0$ such
that for arbitrary $r>0$ and $T-r_0\in(0,1]$
\beq\label{E30}\beg{split} & |\nn_h
P_T f|(\xi) \le r\big\{P_Tf\log f- (P_Tf)\log P_T f\big\}(\xi)\\
& +\frac{rP_Tf(\xi)}{2}\bigg\{ \ff{C|h(0)|^2}{r^2} \Big(\ff 1
{T-r_0}
+\ff{\|M\|^2}{(T-r_0)^{4k+3}}\Big)+\frac{C\|h\|_\infty^2\|W(\xi)\|_\infty r_0}{r^2}\\
& +\log\E^\xi \exp\bigg[
\ff{C(1+\|M\|^2)|h(0)|^2}{r^2(T-r_0)^{4k+2}}\int_0^T W(X(s), Y(s))\d
s\bigg]\bigg\}.
\end{split}\end{equation}
Moreover, since for $s\in[0,T]$ $$
\tilde{W}(X(s),Y(s))\exp\bigg(-\int_0^s\frac{
\L \tilde{W}(X(r),Y(r),X(r-r_0),Y(r-r_0))}{\tilde{W}(X(r),Y(r))}\d
r\bigg)
$$
is a local martingale by the It\^o formula, in addition to
$\tilde{W}\geq1$, we obtain from \eqref{E28} that
\beq\label{E29}\beg{split} &\E^\xi
\exp\bigg[(\lambda_1-\lambda_2)\int_0^T W(X(s), Y(s))\d
s-\lambda_2r_0\|W(\xi)\|_\infty\bigg]\\
&\leq\E^\xi \exp\bigg[\int_0^T \Big(\ll_1 W(X(s), Y(s))-\ll_2W(X(s-r_0), Y(s-r_0))\Big) \d
s\bigg]\\
&\leq\E^\xi \exp\bigg[
KT-\int_0^T\frac{\L \tilde{W}(X(s),Y(s);X(s-r_0), Y(s-r_0))}{\tilde{W}(X(s),Y(s))}\d s\\
&\qquad\qquad\quad-\lambda_3\int_0^T\tilde{U}(X(s),Y(s))\d s+\lambda_4\int_0^T\tilde{U}(X(s-r_0),Y(s-r_0))ds\bigg]\\
&\leq
\exp( \lambda_4r_0\|\tilde{U}(\xi)\|_\infty+
KT)\\
&\quad\times\E^\xi
\bigg[\tilde{W}(X(T),Y(T))\exp\bigg(-\int_0^T\frac{\L \tilde{W}(X(s),Y(s); X(s-r_0), Y(s-r_0))}{\tilde{W}(X(s),Y(s))}\d s\bigg)\bigg]\\
&\leq
\exp( \lambda_4r_0\|\tilde{U}(\xi)\|_\infty+
KT)\tilde{W}(\xi(0)).
\end{split}\end{equation}
Combining \eqref{E30} and \eqref{E29}, together with the H\"older
inequality, yields \eqref{E26}.
\end{proof}

The next example shows that Theorem \ref{T4.2} applies to the equation (\ref{E20}) with a highly non-linear drift.

\beg{exa}\label{Ex4.2}{\rm Consider delay SDE on $\mathbb{R}^2$
\beg{equation} \beg{cases}
\d X(t)=-\{X(t)+Y(t)\}\d t\\
\d Y(t)=\d
B(t)+\Big\{-Y^3(t)+\dfrac{1}{4}Y^3(t-r_0)+\dfrac{1}{2}X(t)-Y(t)\Big\}\d
t
\end{cases}
\end{equation}
with initial data $\xi\in C([-r_0,0];\mathbb{R}^2)$. In this example
for $z=(x,y),z'=(x',y')\in\mathbb{R}^2$ let
$Z(z)=\frac{1}{2}x-y-y^3$ and $b(z')=\frac{1}{4}y'^3$. For
$W(x,y)=1+x^2+y^4$ it is easy to see that
\begin{equation*}
\begin{split}
\L W(x,y;x',y')
&=-2x(x+y)+4y^3\Big(\frac{1}{2}x-y-y^3+\frac{1}{4}y'^3\Big)\\
&\leq-x^2+y^2-4y^4-4y^6+y^3y'^3+2y^3x\\
&\leq y^2-4y^4-\frac{5}{2}y^6+\frac{1}{2}y'^6.
\end{split}
\end{equation*}
Then \eqref{E21} holds for $\beta=\frac{5}{2},\gamma=\frac{1}{2}$
and $U(x,y)=y^6$. Moreover for $z=(x,y),z'=(x',y')\in\mathbb{R}^2$
there exists $c>0$ such that
\begin{equation*}
\begin{split}
|Z(z)-Z(z')|^2\lor |b(z)-b(z')|^2
&\leq c|z-z'|^2(|y-y'|^4+|y'|^4).
\end{split}
\end{equation*}
Thus condition \eqref{E25} holds, Therefore, by Theorem \ref{T4.2} we obtain
\eqref{E23}.

 To derive (\ref{E26}), we take
$w(x,y)=\frac{1}{4}(x^2+y^4)+\frac{1}{10}xy$  and set
$\tilde{W}(x,y)=\exp(w(x,y)-\inf w)$. Compute for
$(x,y,x',y')\in\mathbb{R}^4$ \beg{equation*}\begin{split}
\frac{\L\tilde{W}}{\tilde{W}}(x,y,x',y')&=\L\log
\tilde{W}(x,y)+\frac{1}{2}|\partial_y\log \tilde{W}|^2(x,y)\\
&\leq-\Big(\frac{1}{2}x+\frac{1}{10}y\Big)(x+y)+\Big(y^3+\frac{1}{10}x\Big)\Big(\frac{1}{2}x-y-y^3+\frac{1}{4}y'^3\Big)+\frac{3}{2}y^2\\
&\qquad+\frac{1}{2}\Big(y^3+\frac{1}{10}x\Big)^2\\
 &\leq0.5((0.35)^2/\epsilon+1.4)^2-(0.2325-\epsilon)x^2-0.5y^4-0.175y^6+0.1375y'^6,
\end{split}\end{equation*}
where $\epsilon>0$ is some constant such that $0.2325-\epsilon>0$.
Then condition \eqref{E28}  holds. Therefore, by Theorem \ref{T4.2} we obtain
\eqref{E26}, which implies the Harnack inequality as in Corollary \ref{C1.4} according to \cite[Proposition 4.1]{GW}. }
\end{exa}

\end{document}